\numberwithin{equation}{section}
\def\({\left(}
\def\){\right)}
\def\le{\leqslant}
\def\ge{\geqslant}
\def \ds{\displaystyle}
\def \R{\mathbb{R}}
\def \C{\mathbb{C}}
\def \N{\mathbb{N}}
\theoremstyle{plain}
\newtheorem{theorem}{Theorem}[section]
\newtheorem{definition}[theorem]{Definition}
\newtheorem{lemma}[theorem]{Lemma}
\newtheorem{proposition}[theorem]{Proposition}
\theoremstyle{remark}
\newtheorem{remark}[theorem]{Remark}
\begin{document}
\title[]{Refinement of the $L^{2}$-decay estimate of solutions to nonlinear Schr\"odinger equations with attractive-dissipative nonlinearity}
\author[N. Kita]{Naoyasu Kita}
\address[]{Faculty of Advanced Science and Technology, Kumamoto University, Kumamoto, 860-8555, Japan}
\email{nkita@kumamoto-u.ac.jp}
\author[H. Miyazaki]{Hayato Miyazaki}
\address[]{Teacher Training Courses, Faculty of Education, Kagawa University, Takamatsu, Kagawa 760-8522, Japan}
\email{miyazaki.hayato@kagawa-u.ac.jp}
\author[T. Sato]{Takuya Sato}
\address[]{Faculty of Advanced Science and Technology, Kumamoto University, Kumamoto, 860-8555, Japan}
\email{satotakuya@kumamoto-u.ac.jp}
\keywords{nonlinear Schr\"odinger equations, $L^{2}$-decay of solutions, arbitrary data}
\subjclass[2020]{35Q55, 35B40}
\date{}
\maketitle

\begin{abstract}
This paper is concerned with the $L^{2}$-decay estimate of solutions to nonlinear dissipative Schr\"odinger equations with power-type nonlinearity of the order $p$.
It is known that the sign of the real part of the dissipation coefficient affects the long-time behavior of solutions,
when neither size restriction on the initial data nor strong dissipative condition is imposed.
In that case, if the sign is negative, then
Gerelmaa, the first and third author \cite{GKS-pre} obtained the $L^{2}$-decay estimate under the restriction $p \le 1+1/d$.
In this paper, we relax the restriction to $p \le 1+4/(3d)$ by refining an energy-type estimate.
Furthermore, when $p < 1+ 4/(3d)$, using an iteration argument, the best available decay rate is established, as given by Hayashi, Li and Naumkin \cite{HLN16}.
\end{abstract}

\section{Introduction} \label{sec:1}
\subsection{Background}
In this paper, we consider a nonlinear Schr\"odinger equation
\begin{align}
\begin{cases}
	\ds i \partial_t u  +  \frac{1}{2} \Delta u  = \lambda |u|^{p-1} u, & (t,x) \in (0, \infty) \times \R^{d}, \\
	u(0, x) = u_{0}(x), & x \in \R^{d},
\end{cases}
	\label{nls} \tag{NLS}
\end{align}
where $d \in \N$, $u = u(t,x)$ is a $\C$-valued unknown function, $u_{0} = u_{0}(x)$ is a given function,
$p>1$, and $\lambda \in \C$.
One imposes a \textit{dissipative condition}
\begin{align*}
	\Re \lambda \in \R, \quad
	\Im \lambda < 0.
\end{align*}
In the case $\Re \lambda < 0$, the condition is called an \textit{attractive-dissipative condition}; otherwise it is said to be a \textit{repulsive-dissipative condition}.
The equation \eqref{nls} appears as a relevant model in various physical phenomena, such as nonlinear optics, Bose-Einstein condensates and so on.
For instance, in the optical fiber engineering,
$\Re \lambda$ corresponds to the magnitude of the nonlinear Kerr effect, and $\Im \lambda$ implies the magnitude
of dissipation due to nonlinear Ohm's law (see e.g. \cite{GA19}).
The dissipative condition is firstly introduced by Shimomura \cite{S-CPDE-2006} on the study of the long-time behavior of solutions to \eqref{nls}.
The condition implies that the $L^2$-norm of the solution monotonically decreases as $t \rightarrow \infty$, because
the solution satisfies
\begin{align}
	\norm{u(t)}_{2}^{2} = \norm{u_{0}}_{2}^{2} + \Im \lambda \int_{0}^{t} \norm{u(s)}_{p+1}^{p+1}\, ds
	\label{ide:1}
\end{align}
for any $t \ge 0$.
The aim of this paper is to investigate the time decay rate of the $L^{2}$-norm of solutions to \eqref{nls}
without the size restriction on the initial data in any dimensions.

For the case $\lambda \in \R \setminus \{0\}$, the equation \eqref{nls} possesses a Hamiltonian structure.
In particular, the $L^{2}$-norm is conserved for $t$.
In that case, if $p \ge 1+ 2/d$,
then it is known that \eqref{nls} admits a global solution that decays in time on the order $-d/2$ in $L^{\infty}$,
provided that the data $u_{0}$ is small in an appropriate weighted Sobolev space (e.g., \cite{CW92, GOV94, HN-AJM-1998}).
The decay rate is the same as that of linear solutions.
In contrast, to the best of our knowledge, when $p< 1+ 2/d$, there are no results regarding
the long-time behavior of solutions to \eqref{nls} under $\lambda \in \R \setminus \{0\}$.

When $\Im \lambda \neq 0$, the long-time behavior of the solution exhibits different aspects.
As the identity \eqref{ide:1} shows,
if $\Im \lambda >0$, then the $L^{2}$-solution blows up in finite time (e.g., \cite{K20a, CMZ19}).
In the case $\Im \lambda < 0$, we shall first review some known results under small data.
For $d \le 3$, when $p$ is sufficiently close to $1+2/d$ with $p \le 1+2/d$, the long-time behavior of the solutions
to \eqref{nls} was identified by Shimomura \cite{S-CPDE-2006}, and the first author and Shimomura \cite{KS-JDE-2007}.
Specifically, they determined the decay rate of the solution as
\[
	\norm{u(t)}_{\infty} \lesssim
	\begin{cases}
		(t \log t)^{-\frac{d}{2}} & \text{if}\ p=1+2/d, \\
		t^{- \frac{1}{p-1}} & \text{if}\ p<1+2/d,
	\end{cases}
\]
and proved that the solution possesses $\norm{u(t)}_{2} \rightarrow 0$ as $t \rightarrow \infty$.
In the case $p > 1 + 2/d$,
the third author \cite{S-AM-2020} showed that the $L^{2}$-norm of the solution does not decay in time and has a lower bound.
These results demonstrate that the long-time behavior of solutions to \eqref{nls} under the dissipative condition
is different from that under the non-dissipative condition $\lambda \in \R \setminus \{0\}$.

The first attempt to study the time decay of the $L^{2}$-norm of solutions without the size restriction on the data
was conducted by the first author and Shimomura \cite{KS-JMSJ-2009}.
They showed that in $d = 1$, under the strong dispersive condition
\begin{align}
      (p-1) |\lambda| \le (p+1) |\Im \lambda|, \quad \Im \lambda < 0,
      \label{sdc}
\end{align}
when $p \le 1 + 2/d$ is close to $1 + 2/d$,
the solution satisfies $\|u(t)\|_{2} \rightarrow 0$ as $t \rightarrow \infty$.
The condition \eqref{sdc} was firstly introduced by Liskevich and Perelmuter \cite{LP95} to study the analyticity
of some semigroup (cf. Okazawa and Yokota \cite{OY02}).
Eventually, assuming the condition \eqref{sdc}, Hayashi, Li and Naumkin \cite{HLN16} identified the time decay rate
of the $L^{2}$ norm of the solution under the initial data $u_{0} \in \Sigma \coloneqq H^{1} \cap \mathcal{F} H^{1}$ as follows:
\begin{align}
	\norm{u(t)}_{2} \lesssim
	\begin{cases}
		(\log t)^{-\frac{d}{d+2}} & \text{if}\ p=1+2/d, \\
		t^{- \frac{d}{d+2}\(\frac{2}{d(p-1)}-1\)} & \text{if}\ p<1+2/d.
	\end{cases}
	\label{hln:rate}
\end{align}
In \cite{KS-AA-2021, KS22b},
the optimality of the decay rate of \eqref{hln:rate} for $d=1$ is discussed under $u_{0} \in \Sigma$, and
they suggested that the factor $d/(d+2)$ in the decay rate of \eqref{hln:rate} could potentially be refined to $d/2$.
However, the best available rate for any $\Sigma$-solutions is in \eqref{hln:rate}.
Another direction for the time decay of the $L^{2}$-norm of solutions is to investigate the relation
between the regularity of solutions and the decay rate.
The direction is treated in Ogawa and the third author \cite{OS-NoDEA-2020} (see also \cite{S-AM-2020, S22a, S-AM-2020}).
It is also indicated that the time decay rate depends on the weighted conditions of the initial data.
In fact, for $d \ge 1$, Cazenave and Naumkin \cite{CN18}, and Cazenave, Han and Naumkin \cite{CHN-NA-2021}
showed that the decay rate of the $L^{2}$ norm of the solution depends on the weight $N$,
under specific initial data $u_{0}$ involved the quadratic phase oscillation $e^{ib |x|^{2}}$ with $b \ll 1$
satisfying $\inf \ev{x}^{N}|u_{0}(x)| > 0$.

The above results on the $L^{2}$-decay of the solution, without imposing any size restriction on the data,
require the strong dissipative condition \eqref{sdc}, except for \cite{CN18} and \cite{CHN-NA-2021}.
In this paper, when $p \le 1 + 2/d$, we identify the time decay rate of the $L^{2}$ norm of the solution
without condition \eqref{sdc} and the size restriction on the initial data $u_{0}$ merely in $\Sigma$,
which is a natural space as the behavior of the solution is considered.
In our setting, the sign of $\Re \lambda$ affects the behavior of the solution.
In fact, when $p \le 1+2/d$, Gerelmaa, the first and third author \cite{GKS24} discussed the $L^{2}$-decay estimate
of the solution to \eqref{nls} under the repulsive dissipative condition $\Re \lambda > 0$ and $\Im \lambda < 0$,
and showed that the decay rate coincides with \eqref{hln:rate}.
On the other hand, under the attractive dissipative condition $\Re \lambda < 0$ and $\Im \lambda < 0$, Gerelmaa,
the first and third author \cite{GKS-pre} obtained the following:
\begin{theorem}[\cite{GKS-pre}] \label{prop:32}
Let $1<p \le 1+1/d$.
Assume $\Re \lambda < 0$ and $\Im \lambda < 0$.
Given $u_{0} \in \Sigma$ with $ \norm{u_{0}}_{\ \Sigma} \neq 0$,
\eqref{nls} admits a unique global solution $u \in C([0, \infty) ; \Sigma)$.
If $p= 1+1/d$, then it holds that
\[
	\norm{ u(t)}_{2} \lesssim (\log (1+t))^{- \frac{d}{(d+2)(p-1)}}
\]
for any $t \ge 1$.
When $1<p< 1+ 1/d$, the estimate
\[
	\norm{ u(t)}_{2} \lesssim (1+t)^{- \frac{d}{d+2}\( \frac{2}{d(p-1)} - \frac{3-d(p-1)}{2-d(p-1)} \)}
\]
is valid for any $t \ge 0$.
\end{theorem}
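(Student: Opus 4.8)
\emph{Proof strategy.} The plan is to run a coupled bootstrap for three quantities — the mass $\norm{u(t)}_{2}$, the gradient energy $\norm{\nabla u(t)}_{2}$, and the weighted norm $\norm{\mathcal{J}(t)u(t)}_{2}$ measured through the Galilei operator $\mathcal{J}(t)\coloneqq x+it\nabla$ — driven by the dissipation identity \eqref{ide:1} and by Gagliardo--Nirenberg inequalities adapted to $\mathcal{J}(t)$. Since $1<p\le 1+1/d<1+2/d$, the nonlinearity is $L^{2}$- and $H^{1}$-subcritical, so the standard contraction argument based on Strichartz estimates yields a unique maximal solution $u\in C([0,T_{\max});\Sigma)$ with the blow-up alternative in $\Sigma$; by \eqref{ide:1} and $\Im\lambda<0$ one already has $\norm{u(t)}_{2}\le\norm{u_{0}}_{2}$, so global existence will follow once $\norm{\nabla u(t)}_{2}$ and $\norm{\mathcal{J}(t)u(t)}_{2}$ are shown to grow at most polynomially, which is a byproduct of the estimates below.

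\emph{Energy inequality and decay-producing estimates.} From the factorization $\mathcal{J}(t)f=it\,e^{i|x|^{2}/(2t)}\nabla\bigl(e^{-i|x|^{2}/(2t)}f\bigr)$ and the usual Gagliardo--Nirenberg inequality one gets, for $t\ge 1$ and $r$ in the admissible range,
\[ \norm{f}_{r}\lesssim t^{-d(\frac12-\frac1r)}\norm{f}_{2}^{1-\sigma_{r}}\norm{\mathcal{J}(t)f}_{2}^{\sigma_{r}},\qquad \sigma_{r}=d\Bigl(\tfrac12-\tfrac1r\Bigr), \]
so $\norm{u(t)}_{r}$ is controlled by powers of $t$ and of $\norm{\mathcal{J}(t)u(t)}_{2}$. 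Next, since $\mathcal{J}(t)$ commutes with $i\partial_{t}+\frac12\Delta$, applying it to \eqref{nls} and using $\mathcal{J}(t)(|u|^{p-1}u)=\frac{p+1}{2}|u|^{p-1}\mathcal{J}(t)u-\frac{p-1}{2}|u|^{p-3}u^{2}\,\overline{\mathcal{J}(t)u}$, then pairing with $\overline{\mathcal{J}(t)u}$ in $L^{2}$ and taking imaginary parts, gives
\[ \frac{d}{dt}\norm{\mathcal{J}(t)u(t)}_{2}^{2}\le\bigl((p-1)|\lambda|-(p+1)|\Im\lambda|\bigr)\int_{\R^{d}}|u|^{p-1}|\mathcal{J}(t)u|^{2}\,dx, \]
and likewise with $\mathcal{J}(t)$ replaced by $\nabla$. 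Under the strong dissipative condition \eqref{sdc} the right-hand side is nonpositive, so $\norm{\mathcal{J}(t)u(t)}_{2}$ stays bounded — the mechanism used by Hayashi--Li--Naumkin \cite{HLN16}; since \eqref{sdc} is not assumed here the coefficient is a positive constant, and Hölder combined with the $\mathcal{J}(t)$-adapted Gagliardo--Nirenberg inequality bound the integral by $t^{-d(p-1)/2}\norm{u(t)}_{2}^{a}\norm{\mathcal{J}(t)u(t)}_{2}^{b}$ for some $a>0$ and $b>2$, which gives a superlinear differential inequality for $\norm{\mathcal{J}(t)u(t)}_{2}$.

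\emph{Mass decay and closing the loop.} To turn \eqref{ide:1} into a quantitative rate, interpolate $\norm{u(t)}_{2}$ between the dissipated norm $\norm{u(t)}_{p+1}$ and $\norm{u(t)}_{q}$ for some $q\in(\tfrac{2d}{d+2},2)$, and bound $\norm{u(t)}_{q}\lesssim\norm{\langle x\rangle u(t)}_{2}\le\norm{u(t)}_{2}+\norm{xu(t)}_{2}$ using $xu=\mathcal{J}(t)u-it\nabla u$ (the initial weighted norm being finite because $u_{0}\in\mathcal{F}H^{1}$). This produces a reverse-type lower bound $\norm{u(t)}_{p+1}^{p+1}\gtrsim\norm{u(t)}_{2}^{\kappa}\bigl(1+\norm{\mathcal{J}(t)u(t)}_{2}+t\norm{\nabla u(t)}_{2}\bigr)^{-\nu}$ with $\kappa>1$, and inserting it into \eqref{ide:1} gives
\[ \frac{d}{dt}\norm{u(t)}_{2}^{2}\le -c\,\norm{u(t)}_{2}^{\kappa}\bigl(1+\norm{\mathcal{J}(t)u(t)}_{2}+t\norm{\nabla u(t)}_{2}\bigr)^{-\nu}. \]
Coupled with the growth inequalities for $\norm{\mathcal{J}(t)u(t)}_{2}$ and $\norm{\nabla u(t)}_{2}$, this is a self-improving system: starting from crude polynomial a priori bounds, the mass inequality extracts decay of $\norm{u(t)}_{2}$, that decay slows the growth of the weighted norms, and iterating converges to the asserted rates. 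Since $\kappa>1$, the limiting ODE for $y=\norm{u(t)}_{2}^{2}$ has the form $y'\le -c\,t^{-\mu}y^{\kappa}$: the exponent $\mu$ equals $1$ exactly when $p=1+1/d$, giving $\norm{u(t)}_{2}\lesssim(\log(1+t))^{-d/((d+2)(p-1))}$, while $\mu<1$ for $p<1+1/d$ gives the stated polynomial rate; the factor $d/(d+2)$ reflects the balance between the localized part of $u$, which decays at the self-similar rate set by the profile ODE $i\partial_{t}v\approx\lambda t^{-d(p-1)/2}|v|^{p-1}v$ (with $v$ the Fourier transform of $e^{-i|x|^{2}/(2t)}u$), and the spreading tail measured by $\norm{\mathcal{J}(t)u(t)}_{2}$.

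\emph{Main obstacle.} The difficulty lies in this coupling: once \eqref{sdc} is dropped, $\norm{\mathcal{J}(t)u(t)}_{2}$ genuinely grows and degrades the mass decay, so one must estimate that growth precisely enough that the system still closes. The restriction $p\le 1+1/d$ is exactly the threshold at which it does, and the refinement to $p\le 1+4/(3d)$ in this paper rests on a sharper bound for $\int_{\R^{d}}|u|^{p-1}|\mathcal{J}(t)u|^{2}\,dx$.
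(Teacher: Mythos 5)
Your proposal is built on the Hayashi--Li--Naumkin machinery: commute $J(t)=x+it\nabla$ (and $\nabla$) with the equation, pair with $\overline{J(t)u}$, and take imaginary parts. That is exactly the mechanism this line of work is designed to avoid, and as you yourself note, without the strong dissipative condition \eqref{sdc} the resulting coefficient $(p-1)|\lambda|-(p+1)|\Im\lambda|$ is positive. You are then left with superlinear differential inequalities for both $\norm{J(t)u(t)}_{2}$ and $\norm{\nabla u(t)}_{2}$ whose closure requires decay of $\norm{u(t)}_{r}$, which in turn requires control of $\norm{J(t)u(t)}_{2}$; the bootstrap you invoke is never shown to close, and with the exponents you display ($b>2$ in the $J$-inequality) it does not obviously close. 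Moreover, the final paragraph asserts without derivation that the critical threshold $\mu=1$ occurs at $p=1+1/d$; from the inequalities you actually write down (time weight $t^{-d(p-1)/2}$ coming from the $J$-factorization) the natural threshold would be $p=1+2/d$, so the specific exponent $\tfrac{3-d(p-1)}{2-d(p-1)}$ in the statement is never produced.

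The missing idea is how the gradient is controlled when $\Re\lambda<0$. The proof behind Theorem \ref{prop:32} (and its refinement in this paper) does not use the commuted $\dot H^{1}$ identity at all: it multiplies the equation by $\overline{\partial_{t}u}$ and takes \emph{real} parts, arriving at the energy identity \eqref{en:ineq1}. There the dissipative term $\lambda_{2}\int_{\R^{d}}|u|^{p-1}|\nabla u|^{2}\,dx$ carries a coefficient involving only $\Im\lambda$ rather than $|\lambda|$, so it has a good sign with no condition \eqref{sdc}; the only indefinite term is $2\lambda_{1}\lambda_{2}\norm{u}_{2p}^{2p}$. Bounding that term by $\norm{\nabla u}_{2}^{d(p-1)}$ via Gagliardo--Nirenberg and solving the resulting ODE gives $\norm{\nabla u(t)}_{2}\lesssim t^{1/(2-d(p-1))}$, which is precisely what generates the growth $\norm{xu(t)}_{2}\lesssim t^{(3-d(p-1))/(2-d(p-1))}$ through the elementary virial inequality $\frac{d}{dt}\norm{xu(t)}_{2}\le\norm{\nabla u(t)}_{2}$ (no commuted equation, no $J$), the exponent in the theorem, and the threshold $p\le 1+1/d$ at which the time weight becomes integrable at rate one. (Absorbing the indefinite term into the dissipative one via Young instead yields $\norm{\nabla u(t)}_{2}\lesssim t^{1/2}$ and the improvement to $p\le 1+4/(3d)$ of Theorem \ref{thm:1}.) The decay then follows from the dissipation identity \eqref{ide:1} together with the scale-invariant inequality $\norm{u}_{q}\lesssim\norm{u}_{2}^{1-d\alpha}\norm{xu}_{2}^{d\alpha}$ of Lemma \ref{lem:in1} applied to the plain weighted norm $\norm{xu}_{2}$ --- a step your proposal does approximate in spirit, but which only yields the stated rates once the correct polynomial growth of $\norm{xu(t)}_{2}$ has been established by the energy argument you omit.
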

In Theorem \ref{prop:32}, the decay rate of the $L^2$-norm is slower than \eqref{hln:rate}.
Further, the more restriction $p \le 1+1/d$ is required.
In this paper, we relax the restriction to $p \le 1+4/(3d)$,
and establish the same decay rate as \eqref{hln:rate} when $p < 1+4/(3d)$.

\subsection{Main result}

Based on the Duhamel principle, we give the definition of solutions to \eqref{nls} as follows:
\begin{definition}[Solution] \label{def:sol}
Let $I \subset \R$ be an interval and fix $t_{0} \in I$.
We say a function $ u \colon I \times \R^d \to \C^{n}$ is a solution to \eqref{nls} on $I$ if $u \in C(I; L^2(\R^d))$
satisfies
\[
	u(t) = U(t- t_{0}) u(t_{0}) - i \int_{t_{0}}^{t} U(t- s) F( u(s))\, ds
\]
in $L^2(\R^d)$ for any $t \in I$, where $U(t) = e^{\frac{it \Delta}{2}}$
and $F(u)=\lambda |u|^{p-1}u$.
\end{definition}

The main result in this paper
is as follows:

\begin{theorem} \label{thm:1}
Let $1<p \le 1+4/(3d)$.
Assume $\Re \lambda <0$ and $\Im \lambda < 0$.
Take $u_{0} \in \Sigma$ with $ \norm{u_{0}}_{\ \Sigma} \neq 0$.
Then \eqref{nls} admits a unique global solution $u \in C([0, \infty) ; \Sigma)$.
Moreover, if $p = 1 + 4/(3d)$, then it holds that
\[
	\norm{ u(t)}_{2} \lesssim (\log (1+t))^{- \frac{3d}{2(d+2)}}
\]
for any $t \ge 1$.
When $1<p < 1+ 4/(3d)$, the estimate
\[
	\norm{ u(t)}_{2} \lesssim (1+t)^{- \frac{d}{d+2}\(\frac{2}{d(p-1)}-1\)}
\]
is valid for any $t \ge 0$.
\end{theorem}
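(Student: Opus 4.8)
The plan is to follow the classical factorization method of Hayashi--Naumkin, as adapted to the dissipative setting by \cite{HLN16,GKS-pre}, and track carefully the two competing sources of decay: the genuine dissipation coming from $\Im\lambda<0$ in the identity \eqref{ide:1}, and the slower phase-absorption mechanism that arises because $\Re\lambda<0$. Writing $U(t)=e^{it\Delta/2}=M(t)D(t)\mathcal{F}M(t)$ with $M(t)=e^{i|x|^2/(2t)}$ and $D(t)$ the dilation, one introduces the new unknown $v(t)=\mathcal{F}U(-t)u(t)$ (or equivalently works with $w=\overline{M}U(-t)u$), so that \eqref{nls} is transformed into an ODE-type equation
\[
	i\partial_t v = \lambda t^{-d(p-1)/2}\,|v|^{p-1}v + (\text{remainder}),
\]
where the remainder collects the commutator terms involving $M(t)-1$ and is controlled in $L^2$ by the $\Sigma$-norm times a negative power of $t$. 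Multiplying by $\bar v$ and taking the real part yields an energy identity whose leading term is $\Im\lambda\, t^{-d(p-1)/2}\|v(t)\|_{p+1}^{p+1}\le 0$; the point is that when $p\le 1+4/(3d)$ one can close the estimate with an improved control of the remainder, which is exactly the refinement of the energy-type estimate advertised in the abstract. I would reproduce that refined energy estimate first, obtaining an a priori bound of the form $\|v(t)\|_2\lesssim (\text{decay})$ together with a bound on $\|v(t)\|_\infty$ and on the weighted norm $\|xv(t)\|_2$, giving global existence in $\Sigma$ by a standard continuation argument.

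The second and harder part is the iteration (bootstrap) argument that upgrades the crude exponent to the optimal rate $-\tfrac{d}{d+2}\bigl(\tfrac{2}{d(p-1)}-1\bigr)$ of \eqref{hln:rate} when $p<1+4/(3d)$ (and the logarithmic rate when $p=1+4/(3d)$). The mechanism is: feeding a decay bound $\|v(t)\|_2\lesssim t^{-\theta}$ back into the energy identity, together with the interpolation $\|v\|_{p+1}^{p+1}\gtrsim \|v\|_2^{2}\big/(\text{something involving }\|v\|_\infty \text{ and } \|xv\|_2)$ — more precisely one uses the Sobolev-type inequality relating $\|v\|_{p+1}$, $\|v\|_2$ and $\|v\|_{H^{0,1}}$ after transferring back to $u$ via $U(t)$, which costs a factor $t^{d(1/2-1/(p+1))}$ — one derives a differential inequality
\[
	\frac{d}{dt}\|v(t)\|_2^2 \le -c\, t^{-d(p-1)/2}\,t^{-\text{(growth of }\|xv\|_2)}\,\|v(t)\|_2^{2+\epsilon(p,d)}
\]
whose solution has a strictly better decay exponent $\theta'=\Phi(\theta)$ than the input $\theta$. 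Iterating the map $\Phi$ and checking that its fixed point is precisely $\tfrac{d}{d+2}\bigl(\tfrac{2}{d(p-1)}-1\bigr)$ (for $p<1+4/(3d)$), while at the endpoint $p=1+4/(3d)$ the recursion degenerates and produces the $\log$-loss $(\log(1+t))^{-3d/(2(d+2))}$, gives the stated rates. One must also verify that the weighted norm $\|xv(t)\|_2$ (equivalently $\|(x-it\nabla)u(t)\|_2$) grows only polynomially at a rate compatible with the iteration — this is where the restriction $p\le 1+4/(3d)$ enters decisively, since the exponent $4/(3d)$ is exactly the threshold at which the gain per iteration step outpaces the loss from the growing weighted norm.

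The main obstacle I anticipate is controlling this weighted-norm growth and the remainder term simultaneously with the sharp constants: in the attractive case $\Re\lambda<0$ the phase correction $|u|^{p-1}$ is not purely dissipative, so the naive energy estimate for $\|xv\|_2$ picks up a term of size $t^{-d(p-1)/2}\|v\|_\infty^{p-1}\|xv\|_2$, and one needs the decay of $\|v\|_\infty$ — itself obtained only after the iteration — to keep this integrable. The resolution, as in \cite{HLN16,GKS-pre}, is to run a coupled bootstrap on the triple $(\|v\|_2,\|v\|_\infty,\|xv\|_2)$, choosing the continuity-argument exponents so that the $4/(3d)$ threshold makes every feedback loop contractive; verifying that the algebra of exponents indeed closes at $4/(3d)$ (rather than only at the weaker $1/d$ of Theorem~\ref{prop:32}) is the technical heart of the proof and the place where the refined energy estimate of Section~\ref{sec:1}'s program must be used in full strength. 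Finally, uniqueness in $C([0,\infty);\Sigma)$ follows from a standard Gronwall argument on the difference of two solutions, using the local Lipschitz bound for $F(u)=\lambda|u|^{p-1}u$ on bounded sets of $L^2$ together with the a priori $L^\infty$ control.
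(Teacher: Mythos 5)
Your proposal follows the Hayashi--Naumkin factorization route ($v=\mathcal{F}U(-t)u$, reduction to an ODE for $v$, coupled bootstrap on $\norm{v}_{2}$, $\norm{v}_{\infty}$, $\norm{xv}_{2}$), which is not the paper's method, and it contains a genuine gap at its core. Closing the bootstrap on $\norm{v}_{\infty}$ and absorbing the remainder in the factorized equation is exactly what requires either small data or the strong dissipative condition \eqref{sdc}: in \cite{HLN16} the pointwise ODE argument for $|v|$ closes only because $(p-1)|\lambda|\le (p+1)|\Im\lambda|$ makes the full nonlinearity (not merely its imaginary part) act dissipatively on $|v|$, and \cite{GKS-pre} does not use the factorization method at all. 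Under the hypotheses of Theorem \ref{thm:1} --- arbitrary data in $\Sigma$ and no condition \eqref{sdc} --- the ``coupled bootstrap'' you invoke has no mechanism to start or to close: for large data the remainder is not small relative to the main term, and the phase generated by $\Re\lambda<0$ is not signed. Avoiding precisely this obstruction is the stated point of the paper, so appealing to \cite{HLN16,GKS-pre} for the resolution is circular.

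The actual proof never controls $\norm{v}_{\infty}$ and runs entirely on $L^2$-based quantities. From \eqref{ide:1} and H\"older's inequality one gets $\frac{d}{dt}\norm{u}_{2}^{2}\lesssim -\norm{u}_{2}^{\beta}\norm{u}_{q}^{p+1-\beta}$ with $q<2$, and the dual Gagliardo--Nirenberg inequality of Lemma \ref{lem:in1} converts $\norm{u}_{q}$ into $\norm{u}_{2}$ and $\norm{xu}_{2}$. The weighted norm is bounded by a virial computation, $\frac{d}{dt}\norm{xu}_{2}\le\norm{\nabla u}_{2}$, and the refined energy estimate \eqref{ene:int}, obtained by extracting the dissipative term $\lambda_{2}\int |u|^{p-1}|\nabla u|^{2}$ from the nonlinearity, gives $\norm{\nabla u(t)}_{2}\lesssim (1+t)^{1/2}$ and hence $\norm{xu(t)}_{2}\lesssim (1+t)^{3/2}$. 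The threshold $p\le 1+4/(3d)$ is then simply the condition $3d(p-1)/4\le 1$ needed for the resulting differential inequality $\frac{d}{dt}(\norm{u(t)}_{2}^{-(d+2)(p-1)/2})\gtrsim (1+t)^{-3d(p-1)/4}$ to produce decay; it is not a contractivity condition for a bootstrap. Finally, the iteration for $p<1+4/(3d)$ is not a fixed-point iteration on the decay exponent of $\norm{u}_{2}$: it iterates the growth exponent $a_{n}$ of $\norm{\nabla u(t)}_{2}\lesssim (1+t)^{a_{n}}$ through the affine recursion \eqref{rr:1}, whose limit is negative, so after finitely many steps $\norm{\nabla u}_{2}$ is uniformly bounded, $\norm{xu(t)}_{2}\lesssim 1+t$, and the rate in \eqref{hln:rate} follows from one further application of \eqref{eq:4}. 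To salvage your write-up you would have to either add hypothesis \eqref{sdc} (defeating the purpose of the theorem) or replace the factorization machinery with this energy/virial scheme.
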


\begin{remark}
The restriction $p \le 1+4/(3d)$ is imposed in Proposition \ref{prop:31},
due to the uniform growth estimate $\norm{\nabla u(t)}_{2} \lesssim t^{1/2}$ given in Proposition \ref{prop:22}.
If $1+4/(3d) < p \le 1+2/d$, it remains an open problem to establish the decay estimate of the $L^{2}$-norm of the solution under the attractive-dissipative condition $\Re \lambda < 0$ and $\Im \lambda < 0$,
without imposing any size restriction on the initial data.
\end{remark}

\begin{remark}
Our proof is also applicable to the case of the repulsive-dissipative condition $\Re \lambda > 0$ and $\Im \lambda < 0$.
\end{remark}

\subsection{Strategy of the proof}
The strategy of the proof of Theorem \ref{thm:1} relies on the argument in \cites{K20a, GKS24}.
Applying the standard energy-type method to \eqref{nls}, H\"older's inequality leads to
\begin{align}
	\frac{d}{dt} \norm{u(t)}_{2}^{2}
	\lesssim - \norm{u(t)}_{2}^{\beta} \norm{u(t)}_{q}^{p+1 - \beta}
	\label{stra:1}
\end{align}
for some $\beta>p+1$, where $q =1$ if $d=1$ and $q = 2(d+1)/(d+2)$ if $d \ge 2$.
The key to obtain the desired decay estimate is to extract as much of the time decay factor as possible from the right-hand side of \eqref{stra:1}.
To this end, we apply the following scale-invariant inequality, which can be regarded as the dual of Gagliardo-Nirenberg's inequality:
\[
	\norm{u}_{q} \lesssim \norm{u}_{2}^{1- d \alpha} \norm{x u}_{2}^{ d \alpha}, \quad \alpha \coloneqq \frac{1}{q} - \frac{1}{2}.
\]
This inequality implies that it suffices to obtain a uniform-in-time estimate of $\norm{xu(t)}_{2}$, in order to get the desired $L^{2}$-decay estimate.
We remark that the norm $\norm{xu(t)}_{2}$ grows over time in general and
the growth rate depends on that of $\norm{\nabla u(t)}_{2}$. When $\Re \lambda \ge 0$, $\norm{\nabla u(t)}_{2}$
is uniformly bounded in time and hence $\norm{x u(t)}_{2} \lesssim t$ holds for large time.
The uniform estimate yields the best available rate in \eqref{hln:rate}.
In contrast,
when $\Re \lambda < 0$,
the norm $\norm{\nabla u(t)}_{2}$ may grow over time, leading to a higher growth for $\norm{xu(t)}_{2}$.
This, in turn, worsens the decay rate of the $L^{2}$-norm of the solution.

In Theorem \ref{prop:32} of \cite{GKS24}, the restriction $p \le 1+1/d$ arises from the use
of the uniform estimate $\norm{\nabla u(t)}_{2} \lesssim t^{\frac{1}{2-d(p-1)}}$ for large time.
We shall show the refined estimate $\norm{\nabla u(t)}_{2} \lesssim t^{1/2}$ and improve upon the restriction.
To get this refined estimate,
one develops the energy-type estimate
\begin{align}
	\frac{d}{dt} E(u(t)) \lesssim \norm{u(t)}_{2}^{\frac{2(4p-d(p-1))}{4-d(p-1)}}, \quad
	E( u(t)) \coloneqq \frac{1}{2} \norm{ \nabla u(t)}_{2}^{2} + \frac{ \lambda_{1}}{p+1} \norm{u(t)}_{p+1}^{p+1},
	\label{ene:int}
\end{align}
by picking up dissipative effect from the nonlinearity.
Further, in the case $p < 1+ 4/(3d)$,
combining the refined $L^{2}$-decay estimate with \eqref{ene:int} leads to improving
the growth rate of the uniform estimate for $\norm{\nabla u(t)}_{2}$, thereby refining the decay rate of $\norm{u(t)}_{2}$.
By repeating the argument, we can obtain the uniform bound of $\norm{\nabla u(t)}_{2}$ and reach
to the decay rate in \eqref{hln:rate} as given by \cite{HLN16}.

\subsection*{Notations}
For any $p \ge 1$, $L^p = L^{p}(\R^d)$ denotes the usual Lebesgue space on $\R^d$ equipped
with the norm $\norm{\, \cdot\, }_{p} \coloneqq \norm{\, \cdot\,}_{L^{p}}$.
Set $\ev{a}=(1+|a|^2)^{1/2}$ for $a \in \C$ or $\R^d$.
$\mathcal{F}[u] = \widehat{u}$ is the usual Fourier transform of a function $u$ on $\R^d$.
For $s \in \R$,
the standard Sobolev space on $\R^{d}$ is defined
by $H^{s} = H^{s}(\R^{d}) \coloneqq \{ u \in L^{2}(\mathbb{R}^d) \mid \norm{u}_{H^{s}} \coloneqq \norm{\ev{\nabla}^{s} u}_{L^2} < \infty \}$.
We denote the weighted Sobolev space by
$\mathcal{F}H^{s} = \mathcal{F}H^{s}(\R^{d}) \coloneqq \{ u \in L^{2}(\mathbb{R}^d)
\mid \norm{u}_{\mathcal{F}H^{s}} \coloneqq \norm{\ev{x}^{s}u}_{L^2} < \infty \}$.
Set $\Sigma = H^{1} \cap \mathcal{F} H^{1}$.
Let $U(t)$ be the Schr\"odinger group $e^{it \Delta/2}$.
$A \lesssim B$ denotes $A \le CB$ for some constants $C>0$.

\bigskip

This paper is organized as follows:
In Section \ref{sec:2}, we give some preliminary results.
The uniform-in-time estimates of $\norm{\nabla u(t)}_{2}$ and $\norm{x u(t)}_{2}$ are given
in Proposition \ref{prop:22} and Lemma \ref{lem:uni}, respectively, developing the energy-type estimate \eqref{ene:int}.
We also give the scale invariant inequality in Lemma \ref{lem:in1}.
Section \ref{sec:3} is devoted to the proof of Theorem \ref{thm:1}.
First, in Proposition \ref{prop:31}, we show the refined $L^{2}$ decay estimate compared to \cite{GKS-pre}.
Secondly, when $p < 1 + 4/(3d)$,
we improve the uniform-in-time estimate of $\norm{\nabla u(t)}_{2}$ in Proposition \ref{prop:33} by the iteration argument.
Finally, the same decay rate as \eqref{hln:rate}
is established at the end of Section \ref{sec:3}.
In Appendix \ref{app:1}, we justify the weighted energy-type estimate used in the proof of Lemma \ref{lem:uni}.

\section{Preliminary} \label{sec:2}

In this section, we give some results which play an important role throughout this paper.
Let us first recall Strichartz' estimates. A pair $(p,q)$ is said to be admissible if
\[
	2\le q,r \le \infty,\quad
      \frac{2}{q} = d\left(\frac12-\frac{1}{r} \right), \quad (d,q,r)\neq (2,2,\infty).
\]

\begin{lemma}[Strichartz' estimate, e.g., \cites{S77, GV85, Y87, KT98}] \label{lem:Str}
Let $(q,r)$ and $(\widetilde{q}, \widetilde{r})$ be admissible pairs.
For any interval $I \ni 0$,
\begin{align}
	\norm{U(t) f}_{L^{q}(I;L^{r}(\R^d))} \le{}& C_{0}\norm{ f}_{L^2(\R^d)}, \label{lem:Str1} \\
	\norm{\int_0^t U(t-s) F(s)\, ds}_{L^{q}(I;L^{r}(\R^d))}
	\le{}& C_{0} \norm{ F}_{L^{\widetilde{q}'}(I;L^{\widetilde{r}'}(\R^d))}, \label{lem:Str2}
\end{align}
where $C_{0} >0$ is a certain constant not depending on $I$, and $\rho'$ is the dual exponent of $\rho \ge 1$.
\end{lemma}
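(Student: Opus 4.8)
The plan is to derive both estimates from the pointwise dispersive estimate for the free propagator, via a $TT^{*}$ duality argument and the Hardy--Littlewood--Sobolev inequality, handling the endpoint by the Keel--Tao method. First I would record the dispersive estimate. Writing $U(t)f$ as convolution against the explicit Schr\"odinger kernel of size $(2\pi|t|)^{-d/2}$ gives the $L^{1}\to L^{\infty}$ bound $\norm{U(t)f}_{\infty}\lesssim |t|^{-d/2}\norm{f}_{1}$; interpolating this against the $L^{2}$-isometry $\norm{U(t)f}_{2}=\norm{f}_{2}$ by Riesz--Thorin yields
\[
	\norm{U(t)f}_{r}\lesssim |t|^{-d(1/2-1/r)}\norm{f}_{r'},\qquad 2\le r\le \infty .
\]
The admissibility relation $2/q=d(1/2-1/r)$ is exactly what makes the time-decay exponent equal to $2/q$, so that the subsequent one-dimensional time integrals scale correctly.

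Next I would prove the homogeneous estimate \eqref{lem:Str1} by duality. Setting $Tf=U(\cdot)f$, the adjoint is $T^{*}G=\int_{\R}U(-s)G(s)\,ds$ and $\norm{T}=\norm{TT^{*}}^{1/2}$, so it suffices to bound
\[
	(TT^{*})G(t)=\int_{\R}U(t-s)G(s)\,ds
\]
from $L^{q'}(\R;L^{r'})$ to $L^{q}(\R;L^{r})$. Inserting the dispersive estimate under the $s$-integral reduces this to the scalar fractional-integration bound $\norm{\,|t|^{-2/q}*g\,}_{L^{q}(\R)}\lesssim \norm{g}_{L^{q'}(\R)}$, which is the Hardy--Littlewood--Sobolev inequality in the non-endpoint range $q>2$ (the exponents match, since $1/q'+2/q=1+1/q$). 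This establishes \eqref{lem:Str1}, and dualizing once more gives $\norm{\int_{\R}U(-s)G(s)\,ds}_{2}\lesssim\norm{G}_{L^{\widetilde q'}(\R;L^{\widetilde r'})}$ for all non-endpoint admissible pairs.

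For the inhomogeneous estimate \eqref{lem:Str2} I would first treat the untruncated operator. Factoring $\int_{\R}U(t-s)F(s)\,ds=U(t)\int_{\R}U(-s)F(s)\,ds$ and chaining the dual homogeneous estimate with \eqref{lem:Str1} immediately yields the bound with the $s$-integral over all of $\R$. The truncated Duhamel operator $\int_{0}^{t}U(t-s)F(s)\,ds$ is then recovered from the untruncated one by the Christ--Kiselev lemma, whose hypothesis $\widetilde q'<q$ holds away from the double endpoint; since the argument uses no property of $I$ beyond $0\in I$ (extend $F$ by zero outside $I$), the constant $C_{0}$ is independent of $I$.

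The main obstacle is the endpoint pair (in dimension $d\ge 3$, $q=2$ and $r=2d/(d-2)$), where $2/q=1$ and both the Hardy--Littlewood--Sobolev step and the Christ--Kiselev lemma fail. Here I would replace the direct fractional-integration estimate by a dyadic decomposition of the bilinear form $(TT^{*})G$ according to the size of $|t-s|$, and then close the estimate by the bilinear real-interpolation argument of Keel and Tao \cite{KT98}, which produces \eqref{lem:Str1} together with the truncated \eqref{lem:Str2} simultaneously. The one genuinely excluded case $(d,q,r)=(2,2,\infty)$ is precisely where this scheme breaks down, consistent with its removal from the hypothesis.
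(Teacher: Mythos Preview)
The paper does not supply a proof of this lemma; it is stated as a well-known result with references to \cite{S77}, \cite{GV85}, \cite{Y87}, and \cite{KT98}, and is used only as a black box in the local well-posedness argument of Proposition~\ref{prop:22}. Your sketch is the standard route through those references: dispersive estimate plus Riesz--Thorin, the $TT^{*}$ reduction, Hardy--Littlewood--Sobolev for the non-endpoint bilinear bound, Christ--Kiselev to pass to the retarded Duhamel integral, and the Keel--Tao bilinear interpolation for the endpoint. All of the exponent bookkeeping you wrote out is correct (in particular the HLS condition $1/q'+2/q=1+1/q$ and the Christ--Kiselev hypothesis $\widetilde q'<2<q$ in the non-endpoint range), so the proposal is a faithful outline of the cited proofs rather than an alternative to anything in the present paper.
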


Let us define $J(t) = x+it \nabla$ for any $t \ge 0$, which is
connected to the Galilean symmetry of the equation \eqref{nls}.
The operator possesses the property
\[
	J(t) = \mathcal{M}(t) it \nabla \mathcal{M}(-t) = U(t) x U(-t), \quad \mathcal{M}(t) \coloneqq e^{\frac{i|x|^2}{2t}}.
\]

Let us prove the existence of a unique global $H^{1}$-solution with $J(t) u \in L^{2}$
and the uniform-in-time estimate for $\norm{\nabla u(t)}_{2}$.
In particular, when $\Re \lambda <0$, the growth rate of the estimate is refined, compared to the previous work \cite{GKS-pre}.
\begin{proposition} \label{prop:22}
Let $1<p \le 1+4/d$ and $\Im \lambda < 0$.
Suppose $ u_{0} \in \Sigma$.
Then \eqref{nls} has a unique global solution $u \in C([0, \infty); H^{1})$ with $J(t) u \in C([0, \infty) ; L^{2})$. The solution exhibits
\begin{align}
	\norm{u(t)}_{2} \le{}& \norm{u_{0}}_{2} \label{uni:1}
\end{align}
for any $t \ge 0$.
Moreover, if $\Re \lambda \ge 0$, there exists $M(u_{0})>0$ not depending on $t$ such that the solution satisfies
\begin{align}
	\norm{ \nabla u(t)}_{2} \le{}& M(u_{0}) \label{uni:3a}
\end{align}
for any $t \ge 0$.
If $\Re \lambda < 0$,
the solution possesses
\begin{align}
	\norm{ \nabla u(t)}_{2} \lesssim{}& (1+t)^{\frac{1}{2}} \label{uni:3b}
\end{align}
for any $t \ge 0$.
\end{proposition}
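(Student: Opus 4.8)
The plan is to combine a Strichartz‑based local theory with a priori bounds coming from the dissipative structure of the nonlinearity.

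\emph{Local theory.} Since $1<p\le 1+4/d$ the power nonlinearity is energy‑subcritical, so a standard contraction argument built on the Strichartz estimates of Lemma~\ref{lem:Str} produces, on some interval $[0,T]$ with $T=T(\|u_{0}\|_{\Sigma})>0$, a unique solution with $u,\nabla u$ and $J(t)u$ in suitable space‑time norms. The operator $J(t)$ commutes with $U(t)$; and from $J(t)=\mathcal M(t)\,it\nabla\mathcal M(-t)$, the gauge identity $\mathcal M(-t)F(u)=F(\mathcal M(-t)u)$, and $|\nabla(|w|^{p-1}w)|\le p|w|^{p-1}|\nabla w|$ one gets the pointwise bound $|J(t)F(u)|\lesssim|u|^{p-1}|J(t)u|$, so $J(t)u$ is handled by exactly the Hölder–Strichartz estimates used for $\nabla u$; continuity $J(\cdot)u\in C([0,T];L^{2})$ then follows from the Duhamel identity $J(t)u(t)=U(t)(xu_{0})-i\int_{0}^{t}U(t-s)J(s)F(u(s))\,ds$ and a Gronwall argument. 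Uniqueness is a byproduct of the same estimates.

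\emph{The $L^{2}$ bound and the energy identity.} Pairing \eqref{nls} with $\bar u$ and taking imaginary parts gives \eqref{ide:1}, and $\Im\lambda<0$ then yields \eqref{uni:1}. For the gradient we use the energy $E$ of \eqref{ene:int}. Splitting \eqref{nls} into its conservative part (of Hamiltonian $E$) and its dissipative part, only the latter contributes to $\tfrac{d}{dt}E(t)$, and after integration by parts one finds $\tfrac{d}{dt}E(t)=\Im\lambda\,Q(u)+c_{0}\,\Re\lambda\,\Im\lambda\int_{\R^{d}}|u|^{2p}\,dx$, where $Q(u):=\int_{\R^{d}}|u|^{p-1}\big(|\nabla u|^{2}+(p-1)\big|\nabla|u|\big|^{2}\big)\,dx\ge 0$ and $c_{0}>0$; along $H^{1}$‑solutions this differentiation is justified by a routine approximation, in the spirit of Appendix~\ref{app:1}. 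The first term is the dissipation picked up from the nonlinearity and is $\le 0$. If $\Re\lambda\ge0$, then $\Re\lambda\,\Im\lambda\le0$, so $\tfrac{d}{dt}E(t)\le0$ and $E(t)\le E(0)$; since $\Re\lambda\ge0$ forces $E(t)\ge\tfrac12\|\nabla u(t)\|_{2}^{2}$, this gives \eqref{uni:3a} with $M(u_{0})^{2}=2E(0)$.

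\emph{The attractive case $\Re\lambda<0$.} Now $c_{0}\,\Re\lambda\,\Im\lambda\int|u|^{2p}$ is positive and must be absorbed into the negative term $\Im\lambda\,Q(u)\le-|\Im\lambda|\int|u|^{p-1}|\nabla u|^{2}$. A Gagliardo–Nirenberg–type inequality — obtained via $v:=|u|^{(p+1)/2}$, using $\|\nabla v\|_{2}^{2}\lesssim\int|u|^{p-1}|\nabla u|^{2}$ and $\int v^{4/(p+1)}=\|u\|_{2}^{2}$ — gives
\[
	\int_{\R^{d}}|u|^{2p}\,dx\ \lesssim\ \Big(\int_{\R^{d}}|u|^{p-1}|\nabla u|^{2}\,dx\Big)^{\frac{2d(p-1)}{d(p-1)+4}}\,\|u\|_{2}^{\frac{2(4p-d(p-1))}{d(p-1)+4}}.
\]
When $p<1+4/d$ the exponent $\tfrac{2d(p-1)}{d(p-1)+4}$ is $<1$, so Young's inequality absorbs the $\int|u|^{p-1}|\nabla u|^{2}$‑factor into $|\Im\lambda|\int|u|^{p-1}|\nabla u|^{2}$ at the price of the power $\tfrac{2(4p-d(p-1))}{4-d(p-1)}$ of $\|u\|_{2}$ — this is exactly \eqref{ene:int}, the admissible range being dictated by $4-d(p-1)>0$. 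By \eqref{uni:1} the right side of \eqref{ene:int} is bounded, so $E(t)\lesssim 1+t$; and since $\tfrac12\|\nabla u(t)\|_{2}^{2}=E(t)+\tfrac{|\Re\lambda|}{p+1}\|u(t)\|_{p+1}^{p+1}$ up to the normalization of $E$, while Gagliardo–Nirenberg and Young (using $\tfrac{d(p-1)}{2}<2$) give $\|u\|_{p+1}^{p+1}\le\varepsilon\|\nabla u\|_{2}^{2}+C$, absorbing the $\varepsilon$‑term yields $\|\nabla u(t)\|_{2}^{2}\lesssim 1+t$, i.e.\ \eqref{uni:3b}. At the endpoint $p=1+4/d$ one argues instead from $\int_{0}^{\infty}\|u(s)\|_{p+1}^{p+1}\,ds<\infty$, a consequence of \eqref{ide:1}: since $(p+1,p+1)$ is then an admissible pair, splitting the maximal existence interval into finitely many pieces on which this quantity is small and running $H^{1}$‑persistence on each gives a uniform bound for $\|\nabla u(t)\|_{2}$, with no size restriction. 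In every case the a priori bounds preclude finite‑time blow‑up, so the solution extends globally with $u\in C([0,\infty);H^{1})$ and $J(\cdot)u\in C([0,\infty);L^{2})$.

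\emph{Main obstacle.} The decisive step is the absorption in the attractive case: bounding $c_{0}\,\Re\lambda\,\Im\lambda\int|u|^{2p}$ by the dissipative term. It works because the interpolation for $v=|u|^{(p+1)/2}$ can be run against the \emph{conserved} quantity $\|u\|_{2}$ (via $\int v^{4/(p+1)}=\|u\|_{2}^{2}$) rather than against the growing $\|\nabla u\|_{2}$; this fails precisely when $d(p-1)\ge 4$, and is the mechanism behind the refined bound $\|\nabla u(t)\|_{2}\lesssim(1+t)^{1/2}$ that drives the whole paper. The only other technical point — rigorously differentiating $E$ along $C([0,T];H^{1})$ solutions — is standard.
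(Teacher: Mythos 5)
Your proposal is correct, and for the core of the statement it follows the same route as the paper: the identity behind \eqref{ide:1} for \eqref{uni:1}; the energy identity for $E$ whose dissipative part you write as $\Im\lambda\,Q(u)+c_{0}\Re\lambda\,\Im\lambda\norm{u}_{2p}^{2p}$ (your $Q(u)=\int|u|^{p-1}(|\nabla u|^{2}+(p-1)|\nabla|u||^{2})\,dx$ is exactly the paper's combination of the two gradient integrals in \eqref{en:ineq1}, rewritten in polar form); monotonicity of $E$ when $\Re\lambda\ge 0$; and, when $\Re\lambda<0$, the Gagliardo--Nirenberg interpolation for $v=|u|^{(p+1)/2}$ against the decreasing $L^{2}$-norm followed by Young's inequality to absorb $\norm{\nabla v}_{2}$ into the dissipation, yielding \eqref{en:ineq2} with the same exponents. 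Two points where you go beyond the paper's write-up are worth noting. First, you make explicit the passage from $E(t)\lesssim 1+t$ to \eqref{uni:3b}, i.e.\ the absorption of the negative potential term $\frac{\lambda_{1}}{p+1}\norm{u}_{p+1}^{p+1}$ via Gagliardo--Nirenberg and Young using $\frac{d(p-1)}{2}<2$; the paper compresses this into one sentence, but the step is needed and your version of it is correct. Second, the paper's proof twice invokes $p<1+4/d$ (for $\theta\in(0,1)$ and $\kappa>1$), so as written it does not cover the endpoint $p=1+4/d$ appearing in the statement; your separate endpoint argument --- global finiteness of $\int_{0}^{\infty}\norm{u(s)}_{p+1}^{p+1}\,ds$ from \eqref{ide:1}, admissibility of $(p+1,p+1)$ precisely when $d(p-1)=4$, and interval-splitting plus $H^{1}$-persistence --- is a standard and correct way to close that case (and in fact gives a uniform bound there, stronger than $(1+t)^{1/2}$). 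Since the main theorem only uses $p\le 1+4/(3d)$, this endpoint is immaterial to the paper's application, but your proof of the proposition as stated is the more complete one.
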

\begin{proof}
The existence of the solution has been already proven by several authors, but we give a sketch of the proof for reader's convenience.
Let us introduce the complete metric space
\begin{align*}
	X_{T} \coloneqq{}& \left\{ u \in C([0, T] ; L^{2}) \cap L^{q_{0}}(0, T; L^{r_{0}})
\mid \norm{ u}_{X_{T}} \le 2C_{0} \norm{u_{0}}_{2} \right\}, \\
	\norm{ u}_{X_{T}} \coloneqq{}& \norm{ u}_{L^{ \infty}(0, T; L^{2})} + \norm{ u}_{L^{q_{0}}(0, T; L^{r_{0}})},
	\quad d( u, v) \coloneqq \norm{u-v}_{X_{T}},
\end{align*}
where $r_{0} = p+1$, $2/q_{0} = d/2 - d/r_{0}$ and $C_{0}>0$ is a certain constant given by Strichartz' estimate.
In what follows, we simply write $L^{q}(0, T; L^{r})$ as $L^{q}_{T}L^{r}$.
The standard well-posedness theory implies that there exist $T>0$ and a unique solution $u \in X_{T}$
to \eqref{nls} with $u \in L^{q}_{T} L^{r}$ for any admissible pair $(q,r)$ (for details, see \cite{YT87}).
Thanks to $\Im \lambda < 0$,
multiplying \eqref{nls} by $\overline{ u}$, taking the imaginary part in the both side and integrating over $\R^{d}$,
we reach to
\begin{align*}
	\frac{1}{2} \frac{d}{dt} \norm{u(t)}_{2}^{2} = \Im \lambda \norm{u(t)}_{p+1}^{p+1}\, dx
	\le 0,
\end{align*}
which yields $\norm{u(t)}_{2} \le \norm{u_{0}}_{2}$ for any $t \in [0, T]$.
Then the $L^{2}$-solution $u$ can be extended globally in time.
Further, since $p<1+4/d$, in view of $u_{0} \in \Sigma$, by means of $J(t) = \mathcal{M}(t) it \nabla \mathcal{M}(-t)$,
we easily verifies $\nabla u$, $J(\cdot) u \in C([0, \infty) ; L^{2}) \cap L^{q}_{loc}((0, \infty); L^{r})$
from the persistence of regularity of solutions via the standard bootstrap argument involving Strichartz' estimate.

Let us move on to prove \eqref{uni:3a} and \eqref{uni:3b}.
Multiplying \eqref{nls} by $\overline{ \partial_{t} u}$, taking the real part in the both side and integrating over $\R^{d}$,
we have
\begin{align*}
	\frac{d}{dt} E(u(t)) = 2 \lambda_{2} \int_{\R^{d}} |u(t)|^{p+1} \Im (u(t) \overline{\partial_{t} u(t)}) \, dx,
\end{align*}
where $ \lambda_{1} = \Re \lambda$, $ \lambda_{2} = \Im \lambda <0$ and
\[
	E( u(t)) = \frac{1}{2} \norm{ \nabla u(t)}_{2}^{2} + \frac{ \lambda_{1}}{p+1} \norm{u(t)}_{p+1}^{p+1}.
\]
Using \eqref{nls} again, one estimates
\begin{align}
\begin{aligned}
	\frac{d}{dt} E(u(t)) ={}& \frac{\lambda_{2}(p+1)}{2} \int_{\R^{d}} |u|^{p-1} | \nabla u|^{2}\, dx
+ \frac{\lambda_{2}(p-1)}{2} \int_{\R^{d}} |u|^{p-3} \Re \(u^{2} \overline{ \nabla u}^{2}\)\, dx \\
	&{}+ 2\lambda_{1}\lambda_{2} \norm{u}_{2p}^{2p} \\
	\le{}& \lambda_{2} \int_{\R^{d}} |u|^{p-1} |\nabla u|^{2}\, dx + 2\lambda_{1}\lambda_{2} \norm{u}_{2p}^{2p}.
\end{aligned}
	\label{en:ineq1}
\end{align}
When $ \lambda_{1} \ge 0$, \eqref{en:ineq1} yields $E(u(t)) \le E(u_{0})$ for any $t \ge 0$ and thus \eqref{uni:3a} is valid.
The case $ \lambda_{1} <0$ is more delicate.
Set $f = |u|^{\frac{p+1}{2}}$. Note that $\norm{ \nabla |u|} \le \norm{ \nabla u}$.
By the Gagliardo-Nirenberg and Young's inequality,
we see from \eqref{en:ineq1} that
\begin{align*}
	\frac{d}{dt} E(u(t)) \le{}& \frac{2\lambda_{2}}{p+1} \norm{ \nabla f}_{2}^{2}
	+ 2\lambda_{1}\lambda_{2}  \norm{f}_{\frac{4p}{p+1}}^{\frac{4p}{p+1}} \\
	\le{}& - \frac{4|\lambda_{2}|}{(p+1)^2}  \norm{ \nabla f}_{2}^{2}
	+ C \norm{u}_{2}^{2p\(1- \theta\)} \norm{ \nabla f}_{2}^{\frac{4p}{p+1} \theta} \\
	\le{}& 
	C \norm{u(t)}_{2}^{2p \kappa \(1- \theta\)},
\end{align*}
where
\[
	\theta = \frac{d(p-1)(p+1)}{p \( 4+ d(p-1) \)} \in (0,1), \quad
	\kappa = \frac{4+d(p-1)}{4-d(p-1)} >1,
\]
since $p < 1+4/d$.
This implies
\begin{align}
	\frac{d}{dt} E(u(t)) \lesssim \norm{u(t)}_{2}^{\frac{2(4p-d(p-1))}{4-d(p-1)}}.
	\label{en:ineq2}
\end{align}
Since $ \norm{u(t)}_{2} \le \norm{u_{0}}_{2}$, \eqref{en:ineq2} yields \eqref{uni:3b}.
This completes the proof.
\end{proof}

To prove the main result, we employ the following uniform-in-time estimate for $\norm{xu(t)}_{2}$:
\begin{lemma} \label{lem:uni}
Assume $1<p < 1+4/d$ and $\Im \lambda < 0$.
Let $ u$ be a global solution given by Proposition \ref{prop:22}.
Then the solution satisfies $u \in C([0, \infty) ; \mathcal{F}H^{1})$.
Moreover if $\Re \lambda \ge 0$, then the estimate
\[
	\norm{x u(t)}_{2} \lesssim 1+ t
\]
holds for any $t \ge 0$. If $\Re \lambda < 0$, the solutions satisfies
\[
	\norm{x u(t)}_{2} \lesssim (1+ t)^{\frac{3}{2}}
\]
for any $t \ge 0$.
\end{lemma}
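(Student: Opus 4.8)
The plan is to deduce the bounds on $\norm{xu(t)}_{2}$ directly from the growth estimates for $\norm{\nabla u(t)}_{2}$ established in Proposition \ref{prop:22}, by means of a weighted energy identity. First I would dispose of the regularity assertion: from $J(t) = x + it\nabla$ we have $xu(t) = J(t)u(t) - it\nabla u(t)$, and since Proposition \ref{prop:22} provides $J(\cdot)u \in C([0,\infty);L^{2})$ and $\nabla u \in C([0,\infty);L^{2})$, this gives $xu \in C([0,\infty);L^{2})$; combined with the pointwise bound $|\ev{x}u| \le |u| + |xu|$ and $u \in C([0,\infty);H^{1})$, this yields $u \in C([0,\infty);\mathcal{F}H^{1})$. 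It therefore remains to prove the two growth estimates for $\norm{xu(t)}_{2}$.

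Next I would derive the weighted energy identity
\begin{align}
	\frac{d}{dt}\norm{xu(t)}_{2}^{2}
	= 2\int_{\R^{d}} x\cdot \Im\big(\overline{u(t)}\,\nabla u(t)\big)\,dx
	+ 2(\Im\lambda)\int_{\R^{d}} |x|^{2}\,|u(t)|^{p+1}\,dx .
	\label{eq:planwei}
\end{align}
Formally, \eqref{eq:planwei} follows from the pointwise relation $\partial_{t}|u|^{2} = -\,\Im(\overline{u}\,\Delta u) + 2(\Im\lambda)\,|u|^{p+1}$, read off from \eqref{nls}, upon multiplying by $|x|^{2}$, using $\Im(\overline{u}\,\Delta u) = \nabla\cdot\Im(\overline{u}\,\nabla u)$, and integrating by parts so that $\nabla(|x|^{2}) = 2x$ appears. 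As $u(t)$ lies merely in $H^{1}\cap\mathcal{F}H^{1}$, neither the differentiation under the integral sign nor this integration by parts is a priori admissible; the standard remedy --- carried out in Appendix \ref{app:1} --- is to work with the bounded Lipschitz weight $|x|^{2}(1+\varepsilon|x|^{2})^{-1}$ in place of $|x|^{2}$, perform the now-legitimate computation, bound the error generated by the gradient of the truncated weight by $2\int |x|\,|u|\,|\nabla u|\,dx$ (its gradient being pointwise $\le 2|x|$), and pass to the limit $\varepsilon\downarrow0$ by monotone convergence in the resulting integral inequality, discarding the nonpositive dissipative term beforehand (so that integrability of $|x|^{2}|u|^{p+1}$ is never needed).

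Finally I would close the estimate. Since $\Im\lambda<0$, the last term in \eqref{eq:planwei} is nonpositive, while the Cauchy--Schwarz inequality bounds the first by $2\norm{xu(t)}_{2}\norm{\nabla u(t)}_{2}$; hence $\frac{d}{dt}\norm{xu(t)}_{2}^{2} \le 2\norm{xu(t)}_{2}\norm{\nabla u(t)}_{2}$, and a routine Gronwall-type argument gives
\[
	\norm{xu(t)}_{2} \le \norm{xu_{0}}_{2} + \int_{0}^{t}\norm{\nabla u(s)}_{2}\,ds .
\]
Inserting the two alternatives of Proposition \ref{prop:22} then completes the proof: when $\Re\lambda\ge0$, $\norm{\nabla u(s)}_{2}\le M(u_{0})$ gives $\norm{xu(t)}_{2}\le \norm{xu_{0}}_{2} + M(u_{0})\,t \lesssim 1+t$; when $\Re\lambda<0$, $\norm{\nabla u(s)}_{2}\lesssim(1+s)^{1/2}$ gives $\norm{xu(t)}_{2}\lesssim 1 + \int_{0}^{t}(1+s)^{1/2}\,ds \lesssim (1+t)^{3/2}$.

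I expect the only genuinely delicate point to be the rigorous justification of \eqref{eq:planwei} for solutions lying only in $\Sigma$ --- that is, the regularization in the spatial weight together with the limiting argument, which is precisely what is deferred to Appendix \ref{app:1}. Once \eqref{eq:planwei} (in its regularized, inequality form) is in hand, the remainder is the elementary one-line Gronwall estimate above.
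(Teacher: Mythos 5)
Your proposal is correct and follows essentially the same route as the paper: continuity in $\mathcal{F}H^{1}$ via $J(t)=x+it\nabla$, the virial-type identity with the dissipative term discarded, Cauchy--Schwarz plus Gronwall to get $\norm{xu(t)}_{2}\le \norm{xu_{0}}_{2}+\int_{0}^{t}\norm{\nabla u(s)}_{2}\,ds$, and then insertion of the gradient bounds from Proposition \ref{prop:22}. The only cosmetic difference is the choice of regularizing weight ($|x|^{2}(1+\varepsilon|x|^{2})^{-1}$ versus the paper's $e^{-2\varepsilon|x|^{2}}|x|^{2}$ in Appendix \ref{app:1}), which changes nothing of substance.
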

\begin{remark}
When $\Re \lambda \ge 0$, the uniform estimate in Lemma \ref{lem:uni} was obtained in previous works \cite{K20a} and \cite{GKS24}
using the operator $J(t)=x+it\nabla$.
We directly establish the estimate
via a virial-type argument, which provides a simpler proof compared to the previous works.

\end{remark}
\begin{proof}
Noting $J(t) = x + it \nabla$, it follows that
\begin{align*}
	\norm{x (u(t) - u(t_{0}) )}_{2}
	\le{}& \norm{J(t) u(t) - J(t_{0}) u(t_{0})}_{2} + |t-t_{0}| \norm{\nabla u(t_{0})}_{2} + |t| \norm{u(t) - u(t_{0})}_{2}
	\rightarrow 0
\end{align*}
as $t \rightarrow t_{0}$, which implies $u \in C([0, \infty) ; \mathcal{F}H^{1})$.
Secondly, let us only treat the case $\Re \lambda <0$ because the other case is similar.
Multiplying \eqref{nls} by $|x|^{2} \overline{u}$, taking the imaginary part in the both side and integrating over $\R^{d}$, we have
\begin{align}
	\frac{1}{2} \frac{d}{dt} \norm{ x u(t)}_{2}^{2} \le \Im \int_{\R^{d}} \overline{u} x \cdot \nabla u\, dx.
	\label{eq:2}
\end{align}
Using \eqref{eq:2} and \eqref{uni:3b}, one deduces from H\"older's inequality that
\begin{align}
	\frac{d}{dt} \norm{x u(t)}_{2} \le \norm{ \nabla u(t)}_{2}
	\lesssim (1+t)^{\frac{1}{2}},
	\label{eq:3}
\end{align}
which implies the desired estimate. Thus the proof is completed.
\end{proof}

\begin{remark}
More precisely, we need the regularization argument to verify \eqref{en:ineq2} and \eqref{eq:3},
because the equation \eqref{nls} makes sense only in $H^{-1}$, and $\partial_{t} u$, $|x|^{2}u \not\in H^{1}$.
\eqref{en:ineq2} can be justified by the persistence of regularity of solutions, together with the standard density argument.
As for the justification of \eqref{eq:3}, we refer the reader to Appendix \ref{app:1}.
\end{remark}

The following scale-invariant inequality, which can be regarded as the dual of Gagliardo-Nirenberg's inequality, plays a crucial role in proving the main result.
The first author applied this inequality in \cite{K20a} to obtain an $L^2$-decay estimate for \eqref{nls}.
\begin{lemma}[Scale invariant inequality] \label{lem:in1}
Let $q \in [1, 2)$ if $d =1$ and $q \in (2d/(d+2), 2)$ if $d \ge 2$.
Set $\alpha = 1/q - 1/2$. Fix $m \in \N$.
Then it holds that
\[
	\norm{ f}_{q} \lesssim \norm{ f}_{2}^{1- \frac{d \alpha}{m}} \norm{|x|^{m} f}_{2}^{ \frac{d \alpha}{m}}.
\]
\end{lemma}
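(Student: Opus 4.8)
The plan is to establish the inequality
\[
	\norm{f}_{q} \lesssim \norm{f}_{2}^{1 - \frac{d\alpha}{m}} \norm{|x|^{m} f}_{2}^{\frac{d\alpha}{m}}, \qquad \alpha = \frac1q - \frac12,
\]
by a dyadic decomposition in physical space combined with H\"older's inequality on each dyadic shell, followed by optimization. First I would observe that since $q \in [1,2)$, the exponent $\alpha$ is positive, and on any ball $B_{R} = \{|x| \le R\}$ H\"older's inequality gives $\norm{f}_{L^{q}(B_{R})} \lesssim R^{d\alpha} \norm{f}_{L^{2}(B_{R})} \le R^{d\alpha} \norm{f}_{2}$, while on the complementary region $|x| > R$ one writes $|f| = |x|^{-m} \cdot |x|^{m} |f|$ and applies H\"older with the same pair of exponents, picking up $\norm{|x|^{-m}}_{L^{s}(|x|>R)}$ for the appropriate $s$ with $1/s = 1/q - 1/2 = \alpha$; this integral converges precisely when $m s > d$, i.e. $m > d\alpha$, which holds in our range of $q$ (for $d \ge 2$ the assumption $q > 2d/(d+2)$ forces $d\alpha < 1 \le m$, and for $d=1$ one has $d\alpha = \alpha < 1/2 \le m$). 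This yields $\norm{f}_{L^{q}(|x|>R)} \lesssim R^{d\alpha - m} \norm{|x|^{m} f}_{2}$.

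Next I would combine the two pieces. Since $q \le 2$, we may crudely bound $\norm{f}_{q} \le \norm{f}_{L^{q}(B_{R})} + \norm{f}_{L^{q}(|x|>R)}$ (or use the $\ell^{q}$ version; the $\ell^{1}$ triangle inequality suffices up to a constant). Thus
\[
	\norm{f}_{q} \lesssim R^{d\alpha} \norm{f}_{2} + R^{d\alpha - m} \norm{|x|^{m} f}_{2}
\]
for every $R > 0$. Then I would optimize over $R$: the right-hand side is minimized (up to a constant) by choosing $R^{m} \sim \norm{|x|^{m} f}_{2} / \norm{f}_{2}$, which is legitimate whenever both norms are positive and finite (the degenerate cases being trivial or handled by a limiting argument). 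Substituting this choice gives
\[
	\norm{f}_{q} \lesssim \left( \frac{\norm{|x|^{m} f}_{2}}{\norm{f}_{2}} \right)^{\frac{d\alpha}{m}} \norm{f}_{2}
	= \norm{f}_{2}^{1 - \frac{d\alpha}{m}} \norm{|x|^{m} f}_{2}^{\frac{d\alpha}{m}},
\]
which is exactly the claimed estimate.

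The main obstacle — really the only subtle point — is verifying that the integrability exponent condition $m > d\alpha$ is genuinely implied by the hypotheses on $q$ and $d$, since this is what makes $\norm{|x|^{-m}}_{L^{s}(|x|>R)}$ finite; I would check the two cases $d=1$ and $d \ge 2$ separately as indicated above, noting that the lower bound $q > 2d/(d+2)$ for $d\ge 2$ is precisely calibrated so that $d\alpha < 1$. A secondary technical point is justifying the optimization when one of the norms vanishes or is infinite: if $\norm{f}_{2} = 0$ then $f = 0$ a.e.\ and the inequality is trivial; if $\norm{|x|^{m} f}_{2} = \infty$ the right-hand side is infinite and there is nothing to prove; and if $\norm{f}_{2} > 0$ with $\norm{|x|^{m} f}_{2} = 0$ then $f$ is supported at the origin, hence $f = 0$, again trivial. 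Everything else is a routine application of H\"older's inequality and elementary calculus, so I expect the proof to be short.
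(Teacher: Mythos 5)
Your proof is correct and follows essentially the same route as the paper: both arguments reduce matters to the two-term bound $\norm{f}_{q}\lesssim R^{d\alpha}\norm{f}_{2}+R^{d\alpha-m}\norm{|x|^{m}f}_{2}$ via H\"older with the dual exponent $2q/(2-q)$ (the paper uses the weight $(1+|y|^{m})^{-1}$ together with scaling instead of your inside/outside-the-ball split, but the integrability condition $m>d\alpha$ is the same), and then optimize the free parameter to $R^{m}\sim\norm{|x|^{m}f}_{2}/\norm{f}_{2}$. The only nit is that for $d=1$, $q=1$ you have $d\alpha=1/2$ exactly, not $<1/2$, but $m\ge 1>1/2$ still gives the needed strict inequality.
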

\begin{proof}
The case $\norm{f}_{2} = 0$ is obvious; otherwise, by the scaling and H\"older's inequality, we have
\begin{align*}
	\norm{ f}_{q} 
	\le{}& \lambda^{\frac{d}{q}} \norm{(1+ |y|^{m})^{-1}}_{2q/(2-q)} \norm{ (1+|y|^{m}) f( \lambda \cdot)}_{2}
	\lesssim \lambda^{d \alpha} \( \norm{ f}_{2} + \lambda^{-m} \norm{|x|^{m} f}_{2} \),
\end{align*}
from which the assertion follows, taking $ \lambda = \norm{|x|^{m} f}_{2}/ \norm{ f}_{2}$.
The proof is completed.
\end{proof}

\section{Proof of the main result} \label{sec:3}

As an initial step to prove Theorem \ref{thm:1}, we shall show the following $L^{2}$-decay estimate.
The proposition yields the result of the case $p=1+4/(3d)$ in Theorem \ref{thm:1}.

\begin{proposition} \label{prop:31}
Let $1< p \le 1+4/(3d)$.
Assume $\Re \lambda < 0$ and $\Im \lambda < 0$.
Given $u_{0} \in \Sigma$ with $ \norm{u_{0}}_{\Sigma} \neq 0$, let $u$ be a global solution given by Proposition \ref{prop:22}.
If $p = 1 + 4/(3d)$, then it holds that
\[
	\norm{ u(t)}_{2} \lesssim (\log (1+t))^{- \frac{3d}{2(d+2)}}
\]
for any $t \ge 1$.
When $1<p < 1+ 4/(3d)$, the estimate
\[
	\norm{ u(t)}_{2} \lesssim (1+t)^{- \frac{d}{d+2}\(\frac{2}{d(p-1)}- \frac{3}{2} \)}
\]
is valid for any $t \ge 0$.
\end{proposition}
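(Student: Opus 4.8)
The plan is to run the standard energy-type dissipation inequality \eqref{stra:1} and feed into it the uniform-in-time bound on $\norm{xu(t)}_{2}$ from Lemma \ref{lem:uni}. First I would start from the differential inequality
\[
	\frac{d}{dt} \norm{u(t)}_{2}^{2} \lesssim - \norm{u(t)}_{2}^{\beta} \norm{u(t)}_{q}^{p+1-\beta},
\]
which follows by multiplying \eqref{nls} by $\overline{u}$, taking imaginary parts, and applying H\"older's inequality to $\norm{u}_{p+1}^{p+1}$ with the exponent $q = 1$ (if $d=1$) or $q = 2(d+1)/(d+2)$ (if $d \ge 2$); here $\beta > p+1$ is the conjugate-type exponent forced by H\"older. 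To turn the $L^q$-factor into a decaying quantity, I apply Lemma \ref{lem:in1} with $m=1$: $\norm{u(t)}_{q} \lesssim \norm{u(t)}_{2}^{1-d\alpha}\norm{xu(t)}_{2}^{d\alpha}$ with $\alpha = 1/q - 1/2$. Then invoking the attractive-dissipative bound $\norm{xu(t)}_{2} \lesssim (1+t)^{3/2}$ from Lemma \ref{lem:uni} converts the right-hand side into $-C(1+t)^{-\gamma}\norm{u(t)}_{2}^{\delta}$ for an explicit power $\gamma$ (coming from $\frac32 \cdot d\alpha \cdot (p+1-\beta)$, note $p+1-\beta<0$) and an explicit $\delta>2$.

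Next I would integrate this scalar ODE inequality $y' \lesssim -(1+t)^{-\gamma} y^{\delta/2}$ for $y(t)=\norm{u(t)}_2^2$. This is a Bernoulli-type inequality: dividing by $y^{\delta/2}$ and integrating gives $y(t)^{1-\delta/2} \gtrsim y(t)^{1-\delta/2} - y_0^{1-\delta/2} \gtrsim \int_0^t (1+s)^{-\gamma}\,ds$ (using $1-\delta/2<0$). The antiderivative of $(1+s)^{-\gamma}$ is $\log(1+t)$ precisely when $\gamma = 1$, which is the borderline case; one checks that $\gamma = 1$ corresponds exactly to $p = 1 + 4/(3d)$, while $\gamma < 1$ (polynomial growth of the integral) corresponds to $1<p<1+4/(3d)$. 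Solving for $y$ and taking square roots then yields $\norm{u(t)}_2 \lesssim (\log(1+t))^{-\frac{1}{\delta/2-1}}$ in the critical case and $\norm{u(t)}_2 \lesssim (1+t)^{-\frac{1-\gamma}{\delta/2-1}}$ in the subcritical case. The bookkeeping step is to verify that the exponents $1/(\delta/2-1)$ and $(1-\gamma)/(\delta/2-1)$ simplify to $\frac{3d}{2(d+2)}$ and $\frac{d}{d+2}\bigl(\frac{2}{d(p-1)}-\frac32\bigr)$ respectively, after substituting $\alpha = \frac{1}{d+1}$ (for $d\ge2$; $\alpha = \frac12$ for $d=1$ gives the same $d\alpha = \frac{d}{d+1}$ only when... actually $d\alpha = \frac12$ when $d=1$, matching the formula $\frac{d}{d+2}$-type computation, so the $d=1$ and $d\ge2$ cases must be reconciled by direct substitution) and the value of $\beta$.

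The main obstacle I anticipate is twofold. First, making the first differential inequality rigorous: as the authors flag in their remarks, \eqref{nls} only holds in $H^{-1}$ and the pairings with $\overline u$, $|x|^2\overline u$ require a regularization/density argument — here one would pass to smooth approximate solutions, derive the inequality, and pass to the limit using the continuity statements $u\in C([0,\infty);\Sigma)$ already established. Second, and more delicate, is the exponent arithmetic: one must track $q$, $\alpha$, $\beta$, the power $\gamma$ of $(1+t)$, and the power $\delta$ of $\norm{u}_2$ through several substitutions and confirm that (i) $\gamma=1 \iff p = 1+4/(3d)$, (ii) the final exponents match the claimed formulas, and (iii) the case distinction $d=1$ versus $d\ge 2$ (different $q$) genuinely produces the same closed-form answer. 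A minor additional point is that for $t\ge 1$ one needs the $\log$ to be bounded below, which is why the critical-case estimate is stated only for $t\ge 1$. Once the algebra is in place the argument is short; it is precisely the identification of the threshold $1+4/(3d)$ with $\gamma = 1$, powered by the refined growth rate $\norm{xu(t)}_2 \lesssim (1+t)^{3/2}$ of Lemma \ref{lem:uni} (itself a consequence of the refined $\norm{\nabla u(t)}_2 \lesssim (1+t)^{1/2}$ of Proposition \ref{prop:22}), that constitutes the improvement over Theorem \ref{prop:32}.
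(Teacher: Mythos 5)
Your proposal follows the paper's proof essentially verbatim: the same energy identity, H\"older's inequality with the same $q$, Lemma \ref{lem:in1} combined with the bound $\norm{xu(t)}_{2} \lesssim (1+t)^{3/2}$ from Lemma \ref{lem:uni}, and integration of the resulting Bernoulli-type inequality (the paper phrases it as $\frac{d}{dt}\bigl(\norm{u(t)}_{2}^{-(d+2)(p-1)/2}\bigr) \gtrsim (1+t)^{-3d(p-1)/4}$), with the threshold $p = 1+4/(3d)$ arising exactly as you identify from $\gamma = \frac{3d(p-1)}{4} = 1$. One small correction for your bookkeeping: since you integrate the inequality for $y = \norm{u(t)}_{2}^{2}$, taking the square root at the end halves the exponent, so the final rates are $1/(\delta-2)$ and $(1-\gamma)/(\delta-2)$ rather than $1/(\delta/2-1)$ and $(1-\gamma)/(\delta/2-1)$; with $\delta = p+1+\frac{d(p-1)}{2}$, i.e. $\delta-2 = \frac{(d+2)(p-1)}{2}$, these do reduce to the claimed $\frac{3d}{2(d+2)}$ and $\frac{d}{d+2}\bigl(\frac{2}{d(p-1)}-\frac{3}{2}\bigr)$ in both the $d=1$ and $d\ge 2$ cases.
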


\begin{proof}[Proof of Proposition \ref{prop:31}]
We shall follow the argument in \cites{K20a, GKS24}.
Multiplying \eqref{nls} by $\overline{ u}$, taking the imaginary part in the both side and integrating over $\R^{d}$,
we see from $\Im \lambda < 0$ and H\"older's inequality that
\begin{align*}
	\frac{1}{2} \frac{d}{dt} \norm{u(t)}_{2}^{2}
	= \Im \lambda \norm{ u(t)}_{p+1}^{p+1}
	\le -|\Im \lambda| \norm{u(t)}_{2}^{\beta} \norm{u(t)}_{q}^{p+1 - \beta},
\end{align*}
where $q =1$ if $d=1$ and $q = 2(d+1)/(d+2)$ if $d \ge 2$, and $\beta>p+1$ is defined by
\[
	\beta \alpha = \frac{p+1}{q} -1, \quad
	\alpha = \frac{1}{q} - \frac{1}{2}.
\]
By means of Lemma \ref{lem:in1}, thanks to Lemma \ref{lem:uni}, one has
\begin{align}
	\frac{1}{2} \frac{d}{dt} \norm{u(t)}_{2}^{2}
	\lesssim{}& - \norm{u(t)}_{2}^{p+1+ \frac{d(p-1)}{2}} \norm{x u(t)}_{2}^{\frac{d(1-p)}{2}}
	\lesssim - (1+t)^{-\frac{3d(p-1)}{4}} \norm{u(t)}_{2}^{p+1+ \frac{d(p-1)}{2}},
	\label{eq:4}
\end{align}
which yields
\begin{align*}
	\frac{d}{dt} \( \norm{u(t)}_{2}^{-\frac{(d+2)(p-1)}{2}} \) \gtrsim (1+t)^{-\frac{3d(p-1)}{4}}
\end{align*}
for any $t \ge 0$.
Noting \eqref{uni:1} and $p \le 1+4/(3d)$, integrating the above for $t$, we establish the desired estimate.
\end{proof}

In the case $p<1+4/(3d)$, we will employ an iteration argument to update the decay rate of $\norm{u(t)}_{2}$ to the rate given by \cite{HLN16}.
Let us consider the recurrence relation
\begin{align}
	a_{n+1} = r a_{n} + q
	\label{rr:1}
\end{align}
for any $n \in \N$, where
\begin{align}
      \begin{aligned}
	r &= \frac{d\(4p-d(p-1)\)}{(d+2)\(4-d(p-1)\)} =1-\frac{2 \(4-3d(p-1) \)}{(d+2) \(4-d(p-1)\)}, \\
      q &= \frac{1}{2} - r \(\frac{2}{d(p-1)} - 1\)
            = - \frac{\( 4-3d(p-1) \) \( 4-(d-2) (p-1) \)}{2(d+2)(p-1) \( 4-d(p-1) \)}.
      \end{aligned}
      \label{rr:2}
\end{align}
The following is valid:
\begin{lemma} \label{lem:rr1}
Assume $1<p<1+4/(3d)$. Let $\{a_{n}\}_{n=1}^{ \infty}$ be a sequence satisfying \eqref{rr:1} with $a_{1} = 1/2$.
Then there exists $\alpha <0$ such that $\alpha = \lim_{n \rightarrow \infty} a_{n} $.
\end{lemma}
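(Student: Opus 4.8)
The plan is to analyze the affine recurrence $a_{n+1}=ra_n+q$ in the standard way, by locating its fixed point and checking contraction. First I would observe that, since $1<p<1+4/(3d)$, we have $4-3d(p-1)>0$ and $4-d(p-1)>0$, so the formula $r=1-\frac{2(4-3d(p-1))}{(d+2)(4-d(p-1))}$ shows $r<1$; one also checks $r>0$ directly from the first expression $r=\frac{d(4p-d(p-1))}{(d+2)(4-d(p-1))}$, since $4p-d(p-1)>0$. Hence $r\in(0,1)$, and the recurrence is a genuine contraction towards its unique fixed point $\alpha=q/(1-r)$.

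Next I would identify $\alpha$ explicitly and verify $\alpha<0$. From the defining relation $q=\tfrac12-r\bigl(\tfrac{2}{d(p-1)}-1\bigr)$ one gets
\[
	\alpha=\frac{q}{1-r}=\frac{\tfrac12-r\bigl(\tfrac{2}{d(p-1)}-1\bigr)}{1-r}
	=\frac{1}{1-r}\cdot\frac12-\frac{r}{1-r}\Bigl(\frac{2}{d(p-1)}-1\Bigr).
\]
A cleaner route: writing $b=\tfrac{2}{d(p-1)}-1$ (note $b>1/2$ since $p<1+4/(3d)$ forces $\tfrac{2}{d(p-1)}>\tfrac32$), the fixed point of $x\mapsto rx+q$ with $q=\tfrac12-rb$ satisfies $\alpha=r\alpha+\tfrac12-rb$, i.e. $(1-r)(\alpha)= \tfrac12 - r b$, hence $\alpha - b = \frac{\tfrac12 - b}{1-r}$. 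Since $b>1/2$ the numerator $\tfrac12-b<0$ and $1-r>0$, so $\alpha-b<0$; but I actually want $\alpha<0$, which needs a little more. One can instead use the closed form $q=-\frac{(4-3d(p-1))(4-(d-2)(p-1))}{2(d+2)(p-1)(4-d(p-1))}$ given in \eqref{rr:2}: the factors $4-3d(p-1)$, $(d+2)$, $(p-1)$, $4-d(p-1)$ are all positive, and $4-(d-2)(p-1)=4-(d-2)(p-1)>0$ as well when $d\ge 2$ (and trivially when $d=1$, where it equals $4+(p-1)>0$), so $q<0$. Combined with $0<r<1$, iterating \eqref{rr:1} from $a_1=1/2$ gives $a_n=r^{n-1}a_1+q\frac{1-r^{n-1}}{1-r}\to \frac{q}{1-r}=:\alpha$, and since $q<0$, $1-r>0$, we conclude $\alpha<0$. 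This is exactly the claim.

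The only real obstacle is bookkeeping with the sign conditions: one must confirm that each factor appearing in the formulas for $r$ and $q$ has the asserted sign throughout the range $1<p<1+4/(3d)$, in every dimension $d\ge 1$ — in particular that $4-3d(p-1)>0$ (immediate from $p<1+4/(3d)$), that $4-d(p-1)>0$ (weaker, hence also immediate), that $4-(d-2)(p-1)>0$, and that $4p-d(p-1)>0$. None of these is difficult, but they are what makes $r\in(0,1)$ and $q<0$, and those two facts are the whole content of the lemma. Once they are in hand, the convergence of an affine recurrence with $|r|<1$ to its fixed point $q/(1-r)$, and the sign of that fixed point, are automatic. I would therefore organize the proof as: (i) verify $0<r<1$; (ii) verify $q<0$; (iii) solve the recurrence and pass to the limit, reading off $\alpha=q/(1-r)<0$.
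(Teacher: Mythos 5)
Your proposal is correct and follows essentially the same route as the paper: verify $r\in(0,1)$ and $q<0$ (which the paper dismisses as ``a direct calculation'' and you spell out, correctly, including the case distinction for the factor $4-(d-2)(p-1)$), then solve the affine recurrence and read off the limit $\alpha=q/(1-r)<0$. The brief detour via $b=\tfrac{2}{d(p-1)}-1$ is harmless since you recognize it does not yield $\alpha<0$ and fall back on the closed form of $q$.
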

\begin{proof}
The relation \eqref{rr:1} rewrites $a_{n+1} - \alpha = r \( a_{n} - \alpha \)$ with $\alpha = q/(1-r)$.
Then $a_{n} = \alpha + r^{n-1} (a_{1} - \alpha)$ holds for any $n \in \N$.
Thanks to $1<p<1+4/(3d)$, we have $r \in (0,1)$ and $q<0$ from a direct calculation.
Hence it is concluded that $a_{n} \rightarrow \alpha <0$ as $n \rightarrow \infty$ as desired.
\end{proof}

With Lemma \ref{lem:rr1} in place, we can establish the time uniform bound of $ \norm{ \nabla u(t)}_{2}$.

\begin{proposition}[Iteration argument] \label{prop:33}
Let $1< p < 1+4/(3d)$.
Assume $\Re \lambda < 0$ and $\Im \lambda < 0$.
Take $u_{0} \in \Sigma$ with $ \norm{u_{0}}_{\ \Sigma} \neq 0$.
Let $u$ be a global solution given by Proposition \ref{prop:31}.
Then 
\eqref{uni:3a} holds for any $t \ge 0$.
\end{proposition}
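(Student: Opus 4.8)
The plan is to run the iteration scheme announced in the introduction. We first record a reduction. Since $p<1+4/(3d)<1+4/d$, the Gagliardo--Nirenberg and Young inequalities give $\frac{|\lambda_{1}|}{p+1}\norm{u}_{p+1}^{p+1}\le\frac14\norm{\nabla u}_{2}^{2}+C\norm{u}_{2}^{\sigma}$ for some $\sigma>0$, so that, using \eqref{uni:1} and the definition of $E$,
\begin{equation*}
	\norm{\nabla u(t)}_{2}^{2}\le 4E(u(t))+C\qquad(t\ge0).
\end{equation*}
Write $b\coloneqq\frac{2(4p-d(p-1))}{4-d(p-1)}>0$ for the exponent appearing in \eqref{en:ineq2}. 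Then whenever $\norm{u(t)}_{2}\lesssim(1+t)^{-c}$ with $bc>1$, integrating \eqref{en:ineq2} in time bounds $E(u(t))$ uniformly, and hence \eqref{uni:3a} follows. So it suffices to improve the $L^{2}$-decay rate of the solution until it is strong enough; to this end we feed a growth bound for $\norm{\nabla u(t)}_{2}$ into \eqref{eq:2} to control $\norm{xu(t)}_{2}$, reinsert that weighted bound into the decay argument of Proposition~\ref{prop:31} (through the scale-invariant inequality of Lemma~\ref{lem:in1} with $m=1$), and repeat.

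Concretely, let $\{a_{n}\}$ be the sequence of Lemma~\ref{lem:rr1} with $a_{1}=1/2$, which decreases strictly to $\alpha<0$. We claim that for every $n\ge1$, either \eqref{uni:3a} holds or $\norm{\nabla u(t)}_{2}\lesssim(1+t)^{a_{n}}$ for all $t\ge0$. This is proved by induction on $n$, the base case $n=1$ being \eqref{uni:3b}. Assuming the bound with exponent $a_{n}\ge0$, \eqref{eq:2} and the Cauchy--Schwarz inequality yield $\frac{d}{dt}\norm{xu(t)}_{2}\le\norm{\nabla u(t)}_{2}\lesssim(1+t)^{a_{n}}$, hence $\norm{xu(t)}_{2}\lesssim(1+t)^{1+a_{n}}$. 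Substituting this into the computation in the proof of Proposition~\ref{prop:31} gives
\begin{equation*}
	\frac{d}{dt}\norm{u(t)}_{2}^{2}\lesssim-(1+t)^{-(1+a_{n})\frac{d(p-1)}{2}}\,\norm{u(t)}_{2}^{p+1+\frac{d(p-1)}{2}}.
\end{equation*}
Because $a_{n}\le a_{1}=1/2<\frac{2}{d(p-1)}-1$---this is the one place where $p<1+4/(3d)$ enters---the time exponent exceeds $-1$, so dividing by the power of $\norm{u(t)}_{2}$ and integrating (with $\norm{u_{0}}_{\Sigma}\neq0$ keeping $\norm{u(t)}_{2}>0$) yields $\norm{u(t)}_{2}\lesssim(1+t)^{-c_{n+1}}$ with $c_{n+1}=\frac{2}{(p-1)(d+2)}\bigl(1-(1+a_{n})\frac{d(p-1)}{2}\bigr)>0$. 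Feeding this into \eqref{en:ineq2} gives $\frac{d}{dt}E(u(t))\lesssim(1+t)^{-bc_{n+1}}$: if $bc_{n+1}>1$ then $E(u(t))$ stays bounded and \eqref{uni:3a} holds, while if $bc_{n+1}\le1$ then $\norm{\nabla u(t)}_{2}\lesssim(1+t)^{(1-bc_{n+1})/2}$, and a direct calculation with \eqref{rr:2} identifies $(1-bc_{n+1})/2=ra_{n}+q=a_{n+1}$, completing the induction. (In the borderline case $bc_{n+1}=1$ a logarithmic factor appears; since $c_{n+1}$ is strictly increasing this happens for at most one $n$, and it does not affect the argument.)

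Finally, the iteration terminates: letting $n\to\infty$ in $a_{n+1}=(1-bc_{n+1})/2$ gives $\alpha=(1-bc_{\infty})/2$, i.e. $bc_{\infty}=1-2\alpha>1$; since $\{c_{n}\}$ is increasing, $bc_{n+1}>1$ for all sufficiently large $n$, so the first alternative in the claim must eventually occur, which proves \eqref{uni:3a}. We expect the main obstacle to be the exponent bookkeeping in the inductive step: one must check that composing the weighted estimate, the improved decay estimate, and the energy estimate \eqref{en:ineq2} reproduces exactly the affine recurrence \eqref{rr:1} with the coefficients \eqref{rr:2}, and that the integrability condition $a_{n}<\frac{2}{d(p-1)}-1$ needed at each step is preserved along the iteration---both of which rest on $p<1+4/(3d)$ together with the monotone decrease of $\{a_{n}\}$.
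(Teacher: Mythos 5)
Your proof is correct and follows essentially the same route as the paper: the same iteration between the energy estimate \eqref{en:ineq2}, the weighted bound \eqref{eq:3}, the decay computation of Proposition \ref{prop:31}, and the recurrence \eqref{rr:1}--\eqref{rr:2}, with the exponent bookkeeping checking out. Your explicit dichotomy formulation and your remark on the borderline case $bc_{n+1}=1$ are in fact slightly more careful than the paper's own write-up, which glosses over that logarithmic edge case.
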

\begin{proof}
We see from \eqref{uni:3b} and Proposition \ref{prop:31} that the solution satisfies
\[
	\norm{ u(t)}_{L^{2}} \lesssim (1+t)^{- \frac{d}{d+2}\(\frac{2}{d(p-1)}- (a_{1} + 1) \)}, \qquad
	\norm{ \nabla u(t)}_{2} \lesssim (1+t)^{a_{1}},
\]
where $a_{1} = 1/2$.
Substituting the first estimate in the above into \eqref{en:ineq2}, one has
\begin{align*}
	\frac{d}{dt} E(u(t))
	\lesssim (1+t)^{- \frac{d}{d+2}\(\frac{2}{d(p-1)}- (a_{1} + 1) \)\frac{2(4p-d(p-1))}{4-d(p-1)}}.
\end{align*}
for any $t \ge 0$.
From a use of Gagliardo-Nirenberg and Young's inequality, this implies
\begin{align*}
	\norm{\nabla u(t)}_{2} \lesssim 1+ (1+t)^{r a_{1} + q} \lesssim (1+t)^{a_{2}},
\end{align*}
where $r$, $q$ are defined as in \eqref{rr:2} and $a_{2}$ fulfills \eqref{rr:1}.
Plugging the above into \eqref{eq:3}, one gets
\begin{align*}
	\norm{x u(t)}_{2} \lesssim (1+ t)^{a_{2} +1}
\end{align*}
for any $t \ge 0$.
Hence it follows from \eqref{eq:4} that
\begin{align*}
	\frac{1}{2} \frac{d}{dt} \norm{u(t)}_{2}^{2}
	\lesssim{}& - \norm{u(t)}_{2}^{p+1+ \frac{d(p-1)}{2}} \norm{x u(t)}_{2}^{\frac{d(1-p)}{2}}
	\lesssim - (1+t)^{-\frac{d(p-1)}{2}(a_{2}+1)} \norm{u(t)}_{2}^{p+1+ \frac{d(p-1)}{2}},
\end{align*}
which yields
\begin{align*}
	\frac{d}{dt} \( \norm{u(t)}_{2}^{-\frac{(d+2)(p-1)}{2}} \) \gtrsim (1+t)^{-\frac{d(p-1)}{2}(a_{2}+1)}
\end{align*}
for any $t \ge 0$.
Integrating the above for $t$, we establish
\begin{align*}
	\norm{u(t)}_{L^{2}} \lesssim (1+t)^{- \frac{d}{d+2}\(\frac{2}{d(p-1)}- (a_{2} + 1)\)}.
\end{align*}
By iterating the above argument, we see from Lemma \ref{lem:rr1} that there exists $n \in \N$ such that $a_{n}<0$.
Hence, denoting the smallest $n$ such that $a_{n}<0$ by $n_{0}$, we establish
\begin{align*}
	\norm{\nabla u(t)}_{2} \lesssim M(u_{0}) - (1+t)^{a_{n_{0}}} \le M(u_{0}),
\end{align*}
as desired.
\end{proof}

We are now in position to prove the case $p<1+4/(3d)$ in Theorem \ref{thm:1}.

\begin{proof}[Proof of the case $p<1+4/(3d)$ in Theorem \ref{thm:1}]
Let $u$ be a global solution given by Proposition \ref{prop:33}.
We see from Proposition \ref{prop:33} that there exists $M(u_{0})>0$ not depending on $t$ such that the solution satisfies
\begin{align*}
	\norm{ \nabla u(t)}_{2} \le{}& M(u_{0})
\end{align*}
for any $t \ge 0$.
Plugging the above into \eqref{eq:3}, $\norm{x u(t)}_{2} \lesssim 1+ t$ holds
for any $t \ge 0$.
Hence, similarly to the proof of Proposition \ref{prop:33}, we deduce from \eqref{eq:4} that
\begin{align*}
	\frac{1}{2} \frac{d}{dt} \norm{u(t)}_{2}^{2}
	\lesssim{}& - \norm{u(t)}_{2}^{p+1+ \frac{d(p-1)}{2}} \norm{x u(t)}_{2}^{\frac{d(1-p)}{2}}
	\lesssim - (1+t)^{-\frac{d(p-1)}{2}} \norm{u(t)}_{2}^{p+1+ \frac{d(p-1)}{2}},
\end{align*}
from which the desired estimate follows.
The proof is completed.

\end{proof}

\appendix

\section{Justification of the derivation to \eqref{eq:3} in Lemma \ref{lem:uni}} \label{app:1}
In this appendix, we shall justify the derivation of \eqref{eq:3} by the regularization argument (e.g., \cite[Proof of Lemma 6.5.2]{C03}).
Set
\[
	f_{\varepsilon}(t) = \norm{e^{-\varepsilon |x|^{2}}|x|u(t)}_{2}^{2}
\]
for any $t \ge 0$ and $\varepsilon>0$.
by the integration by part, we see from \eqref{nls} and $\Im \lambda < 0$ that
\begin{align*}
	f'_{\varepsilon}(t) &= \Im \int_{\R^{d}} \left\{ \nabla \( e^{-2\varepsilon |x|^{2}} |x|^{2} \overline{u(t)} \) \cdot \nabla u(t)  + 2 \lambda e^{-\varepsilon |x|^{2}} |x|^{2} |u(t)|^{p+1} \right\} dx \\
	&\le \Im \int_{\R^{d}} e^{-\varepsilon |x|^{2}}\(2- 4 \varepsilon |x|^{2} \) e^{-\varepsilon |x|^{2}} x  \overline{u(t)} \cdot \nabla u(t)\, dx.
\end{align*}
Since $e^{-\varepsilon |x|^{2}}\(2- 4 \varepsilon |x|^{2} \)$ is bounded in $x$ and $\varepsilon$, Cauchy-Schwarz' inequality gives us
\begin{align*}
	f'_{\varepsilon}(t) \le \norm{\nabla u(t)}_{2} \sqrt{f_{\varepsilon}(t)}.
\end{align*}
Integrating the above from $0$ to $t$, we obtain
\begin{align*}
	f_{\varepsilon}(t) \le \norm{x u_{0}}_{2} + \int_{0}^{t} \norm{\nabla u((s))}_{2} \sqrt{f_{\varepsilon}(s)}\, ds.
\end{align*}
Hence, Gronwall's inequality leads to
\begin{align*}
	\sqrt{f_{\varepsilon}(t)} \le \norm{xu_{0}}_{2} + C \int_{0}^{t} \norm{\nabla u(s)}_{2}\, ds.
\end{align*}
Thanks to Fatou's lemma, we have $\norm{x u(t)}_{2}^{2} \le f_{\varepsilon}(t)$ for any $t$. Then
\begin{align*}
	\norm{x u(t)}_{2} \le \norm{xu_{0}}_{2} + C \int_{0}^{t} \norm{\nabla u(s)}_{2}\, ds
\end{align*}
holds. The estimate is nothing but \eqref{eq:3}.
\hfill \qed

\subsection*{Acknowledgments}
N.K. was supported by JSPS KAKENHI Grant Numbers 23K03168 and 23K03183.
H.M. was supported by JSPS KAKENHI Grant Number 22K13941 and Kagawa University Research Promotion Program 2024 Grant Number 24K0D005.
T.S. was supported by JSPS KAKENHI Grant Numbers 22K13937 and 23KJ1765.

\begin{bibdiv}
\begin{biblist}

\bib{GA19}{book}{
      author={Agrawal, Govind~P.},
       title={Nonlinear {F}iber {O}ptics, {S}ixth {E}dition},
   publisher={Academic Press},
        date={2019},
        ISBN={978-0-12-817042-7},
         url={https://doi.org/10.1016/C2018-0-01168-8},
}

\bib{C03}{book}{
      author={Cazenave, Thierry},
       title={Semilinear {S}chr\"{o}dinger equations},
      series={Courant Lecture Notes in Mathematics},
   publisher={New York University, Courant Institute of Mathematical Sciences, New York; American Mathematical Society, Providence, RI},
        date={2003},
      volume={10},
        ISBN={0-8218-3399-5},
         url={https://doi.org/10.1090/cln/010},
      review={\MR{2002047}},
}

\bib{CHN-NA-2021}{article}{
      author={Cazenave, Thierry},
      author={Han, Zheng},
      author={Naumkin, Ivan},
       title={Asymptotic behavior for a dissipative nonlinear {S}chr\"{o}dinger equation},
        date={2021},
        ISSN={0362-546X},
     journal={Nonlinear Anal.},
      volume={205},
       pages={Paper No. 112243, 37},
         url={https://doi.org/10.1016/j.na.2020.112243},
      review={\MR{4212087}},
}

\bib{CMZ19}{article}{
      author={Cazenave, Thierry},
      author={Martel, Yvan},
      author={Zhao, Lifeng},
       title={Finite-time blowup for a {S}chr\"{o}dinger equation with nonlinear source term},
        date={2019},
        ISSN={1078-0947},
     journal={Discrete Contin. Dyn. Syst.},
      volume={39},
      number={2},
       pages={1171\ndash 1183},
         url={https://doi.org/10.3934/dcds.2019050},
      review={\MR{3918212}},
}

\bib{CN18}{article}{
      author={Cazenave, Thierry},
      author={Naumkin, Ivan},
       title={Modified scattering for the critical nonlinear {S}chr\"{o}dinger equation},
        date={2018},
        ISSN={0022-1236},
     journal={J. Funct. Anal.},
      volume={274},
      number={2},
       pages={402\ndash 432},
         url={https://doi.org/10.1016/j.jfa.2017.10.022},
      review={\MR{3724144}},
}

\bib{CW92}{article}{
      author={Cazenave, Thierry},
      author={Weissler, Fred~B.},
       title={Rapidly decaying solutions of the nonlinear {S}chr\"{o}dinger equation},
        date={1992},
        ISSN={0010-3616},
     journal={Comm. Math. Phys.},
      volume={147},
      number={1},
       pages={75\ndash 100},
         url={http://projecteuclid.org/euclid.cmp/1104250527},
      review={\MR{1171761}},
}

\bib{GKS-pre}{article}{
      author={Gerelmaa, Jadamba},
      author={Kita, Naoyasu},
      author={Sato, Takuya},
       title={{$L^2$}-decay estimate of solutions to dissipative nonlinear {S}chr\"odinger equations in {$\R^n$} without strong dissipative condition},
     journal={preprint},
}

\bib{GKS24}{article}{
      author={Gerelmaa, Jadamba},
      author={Kita, Naoyasu},
      author={Sato, Takuya},
       title={{$L^2$}-decay of solutions to dissipative nonlinear {S}chr\"{o}dinger equation with large initial data},
        date={2024},
        ISSN={1072-3374},
     journal={J. Math. Sci. (N.Y.)},
      volume={279},
      number={6},
       pages={814\ndash 823},
         url={https://doi.org/10.1007/s10958-024-07062-8},
      review={\MR{4731323}},
}

\bib{GOV94}{article}{
      author={Ginibre, J.},
      author={Ozawa, T.},
      author={Velo, G.},
       title={On the existence of the wave operators for a class of nonlinear {S}chr\"{o}dinger equations},
        date={1994},
        ISSN={0246-0211},
     journal={Ann. Inst. H. Poincar\'{e} Phys. Th\'{e}or.},
      volume={60},
      number={2},
       pages={211\ndash 239},
         url={http://www.numdam.org/item?id=AIHPA_1994__60_2_211_0},
      review={\MR{1270296}},
}

\bib{GV85}{article}{
      author={Ginibre, J.},
      author={Velo, G.},
       title={The global {C}auchy problem for the nonlinear {S}chr\"{o}dinger equation revisited},
        date={1985},
        ISSN={0294-1449},
     journal={Ann. Inst. H. Poincar\'{e} Anal. Non Lin\'{e}aire},
      volume={2},
      number={4},
       pages={309\ndash 327},
         url={http://www.numdam.org/item?id=AIHPC_1985__2_4_309_0},
      review={\MR{801582}},
}

\bib{HLN16}{article}{
      author={Hayashi, Nakao},
      author={Li, Chunhua},
      author={Naumkin, Pavel~I.},
       title={Time decay for nonlinear dissipative {S}chr\"{o}dinger equations in optical fields},
        date={2016},
        ISSN={1687-9120},
     journal={Adv. Math. Phys.},
       pages={Art. ID 3702738, 7},
         url={https://doi.org/10.1155/2016/3702738},
      review={\MR{3465033}},
}

\bib{HN-AJM-1998}{article}{
      author={Hayashi, Nakao},
      author={Naumkin, Pavel~I.},
       title={Asymptotics for large time of solutions to the nonlinear {S}chr\"{o}dinger and {H}artree equations},
        date={1998},
        ISSN={0002-9327},
     journal={Amer. J. Math.},
      volume={120},
      number={2},
       pages={369\ndash 389},
         url={http://muse.jhu.edu/journals/american_journal_of_mathematics/v120/120.2hayashi.pdf},
      review={\MR{1613646}},
}

\bib{KT98}{article}{
      author={Keel, Markus},
      author={Tao, Terence},
       title={Endpoint {S}trichartz estimates},
        date={1998},
        ISSN={0002-9327},
     journal={Amer. J. Math.},
      volume={120},
      number={5},
       pages={955\ndash 980},
         url={http://muse.jhu.edu/journals/american_journal_of_mathematics/v120/120.5keel.pdf},
      review={\MR{1646048}},
}

\bib{K20a}{article}{
      author={Kita, Naoyasu},
       title={Existence of blowing-up solutions to some {S}chr\"{o}dinger equations including nonlinear amplification with small data},
     journal={J. appl. sci. eng., A, vol. 2, no. 1, pp. 5--10, Dec. 2021 (OCAMI Preprint Series 20-2 (2020))},
}

\bib{KS22b}{article}{
      author={Kita, Naoyasu},
      author={Sato, Takuya},
       title={Optimal {$L^2$}-decay of solutions to a nonlinear {S}chr\"{o}dinger equation with sub-critical dissipative nonlinearity},
        date={2022},
        ISSN={1021-9722},
     journal={NoDEA Nonlinear Differential Equations Appl.},
      volume={29},
      number={4},
       pages={Paper No. 41, 19},
         url={https://doi.org/10.1007/s00030-022-00772-5},
      review={\MR{4423544}},
}

\bib{KS-JDE-2007}{article}{
      author={Kita, Naoyasu},
      author={Shimomura, Akihiro},
       title={Asymptotic behavior of solutions to {S}chr\"{o}dinger equations with a subcritical dissipative nonlinearity},
        date={2007},
        ISSN={0022-0396},
     journal={J. Differential Equations},
      volume={242},
      number={1},
       pages={192\ndash 210},
         url={https://doi.org/10.1016/j.jde.2007.07.003},
      review={\MR{2361107}},
}

\bib{KS-JMSJ-2009}{article}{
      author={Kita, Naoyasu},
      author={Shimomura, Akihiro},
       title={Large time behavior of solutions to {S}chr\"{o}dinger equations with a dissipative nonlinearity for arbitrarily large initial data},
        date={2009},
        ISSN={0025-5645},
     journal={J. Math. Soc. Japan},
      volume={61},
      number={1},
       pages={39\ndash 64},
         url={http://projecteuclid.org/euclid.jmsj/1234189028},
      review={\MR{2272871}},
}

\bib{LP95}{article}{
      author={Liskevich, V.~A.},
      author={Perelmuter, M.~A.},
       title={Analyticity of submarkovian semigroups},
    language={English},
        date={1995},
        ISSN={0002-9939},
     journal={Proc. Am. Math. Soc.},
      volume={123},
      number={4},
       pages={1097\ndash 1104},
}

\bib{KS-AA-2021}{article}{
      author={Naoyasu, Kita},
      author={Takuya, Sato},
       title={Optimal {$L^2$}-decay of solutions to a cubic dissipative nonlinear {S}chr\"{o}dinger equation},
        date={2022},
        ISSN={0921-7134},
     journal={Asymptot. Anal.},
      volume={129},
      number={3-4},
       pages={505\ndash 517},
      review={\MR{4491888}},
}

\bib{OS-NoDEA-2020}{article}{
      author={Ogawa, Takayoshi},
      author={Sato, Takuya},
       title={{$\rm L^2$}-decay rate for the critical nonlinear {S}chr\"{o}dinger equation with a small smooth data},
        date={2020},
        ISSN={1021-9722},
     journal={NoDEA Nonlinear Differential Equations Appl.},
      volume={27},
      number={2},
       pages={Paper No. 18, 20},
         url={https://doi.org/10.1007/s00030-020-0621-3},
      review={\MR{4069852}},
}

\bib{OY02}{article}{
      author={Okazawa, Noboru},
      author={Yokota, Tomomi},
       title={Global existence and smoothing effect for the complex {G}inzburg-{L}andau equation with {$p$}-{L}aplacian},
        date={2002},
        ISSN={0022-0396},
     journal={J. Differential Equations},
      volume={182},
      number={2},
       pages={541\ndash 576},
         url={https://doi.org/10.1006/jdeq.2001.4097},
      review={\MR{1900334}},
}

\bib{S-AM-2020}{article}{
      author={Sato, Takuya},
       title={{$L^2$}-decay estimate for the dissipative nonlinear {S}chr\"{o}dinger equation in the {G}evrey class},
        date={2020},
        ISSN={0003-889X},
     journal={Arch. Math. (Basel)},
      volume={115},
      number={5},
       pages={575\ndash 588},
         url={https://doi.org/10.1007/s00013-020-01483-y},
      review={\MR{4154572}},
}

\bib{S22a}{article}{
      author={Sato, Takuya},
       title={Large time behavior of solutions to the critical dissipative nonlinear {S}chr\"{o}dinger equation with large data},
        date={2022},
        ISSN={1424-3199},
     journal={J. Evol. Equ.},
      volume={22},
      number={3},
       pages={Paper No. 59, 27},
         url={https://doi.org/10.1007/s00028-022-00815-5},
      review={\MR{4447337}},
}

\bib{S-CPDE-2006}{article}{
      author={Shimomura, Akihiro},
       title={Asymptotic behavior of solutions for {S}chr\"{o}dinger equations with dissipative nonlinearities},
        date={2006},
        ISSN={0360-5302},
     journal={Comm. Partial Differential Equations},
      volume={31},
      number={7-9},
       pages={1407\ndash 1423},
         url={https://doi.org/10.1080/03605300600910316},
      review={\MR{2254620}},
}

\bib{S77}{article}{
      author={Strichartz, Robert~S.},
       title={Restrictions of {F}ourier transforms to quadratic surfaces and decay of solutions of wave equations},
        date={1977},
        ISSN={0012-7094},
     journal={Duke Math. J.},
      volume={44},
      number={3},
       pages={705\ndash 714},
         url={http://projecteuclid.org/euclid.dmj/1077312392},
      review={\MR{512086}},
}

\bib{YT87}{article}{
      author={Tsutsumi, Yoshio},
       title={{$L^2$}-solutions for nonlinear {S}chr\"{o}dinger equations and nonlinear groups},
        date={1987},
        ISSN={0532-8721},
     journal={Funkcial. Ekvac.},
      volume={30},
      number={1},
       pages={115\ndash 125},
         url={http://www.math.kobe-u.ac.jp/~fe/xml/mr0915266.xml},
      review={\MR{915266}},
}

\bib{Y87}{article}{
      author={Yajima, Kenji},
       title={Existence of solutions for {S}chr\"{o}dinger evolution equations},
        date={1987},
        ISSN={0010-3616},
     journal={Comm. Math. Phys.},
      volume={110},
      number={3},
       pages={415\ndash 426},
         url={http://projecteuclid.org/euclid.cmp/1104159313},
      review={\MR{891945}},
}

\end{biblist}
\end{bibdiv}

\end{document}